\documentclass[12pt,reqno,a4paper]{amsart}
\usepackage[utf8]{inputenc}
\usepackage[T1]{fontenc}
\usepackage{amssymb}
\usepackage{pgf,tikz,pgfplots}
\pgfplotsset{compat=1.12}
\usepackage{mathrsfs}
\usetikzlibrary{arrows}
\usepackage{bm}
\usepackage[alpine]{ifsym}
\usepackage{xpicture}
\usepackage{calculator}
\usepackage{graphicx}
\usepackage{enumerate}

\textwidth=15cm \textheight=22cm \topmargin=0.5cm
\oddsidemargin=0.5cm \evensidemargin=0.5cm
\usepackage[a4paper,top=3.3cm,bottom=3cm,left=3cm,right=3cm,bindingoffset=5mm]{geometry}

                                                   %

\usepackage{color}

\usepackage[colorlinks=true]{hyperref}
\newtheorem{theorem}{Theorem}[section]
\newtheorem{lemma}[theorem]{Lemma}
\newtheorem{corollary}[theorem]{Corollary}

\theoremstyle{definition}
\newtheorem{example}[theorem]{Example}
\newtheorem{remark}[theorem]{Remark}
\newtheorem{definition}[theorem]{Definition}

\newtheorem*{A}{Theorem A}
\newtheorem*{B}{Theorem B}

\numberwithin{equation}{section}





\newcommand{\betabarra}{\bar{\beta}}





\title[Curves with prescribed multiplicities and bounded negativity]{On the degree of curves with prescribed multiplicities and bounded negativity}
\author[C. Galindo]{Carlos Galindo}
\address{Universitat Jaume I, Campus de Riu Sec, Departamento de Matem\'aticas \& Institut Universitari de Matem\`atiques i Aplicacions de Castell\'o, 12071
Caste\-ll\'on de la Plana, Spain.}\email{galindo@uji.es}  \email{cavila@uji.es} \email{callejo@uji.es}

\author[F. Monserrat]{Francisco Monserrat}
\address{Instituto Universitario de
Matem\'atica Pura y Aplicada, Universidad Polit\'ecnica de Valencia,
Camino de Vera s/n, 46022 Valencia (Spain).}
\email{framonde@mat.upv.es}

\author[C.-J. Moreno-\'Avila]{Carlos-Jes\'us Moreno-\'Avila}

\author[E. P\'erez-Callejo]{Elvira P\'erez-Callejo}


\subjclass[2010]{Primary: 14C20, 14E15, 13A18}
\keywords{Curves with prescribed multiplicities; Bounded negativity conjecture; Proximity}

\thanks{Partially supported by MCIN/AEI/10.13039/501100011033 and by ``ERDF A way of making Europe", grants  PGC2018-096446-B-C22 and RED2018-102583-T, by MCIN/AEI/10.13039/501100011033 and by "ESF Investing in your future”, grant PRE2019-089907, as well as by Universitat Jaume I, grant UJI-B2021-02.}


\begin{document}
\begin{abstract}

We provide a lower bound on the degree of curves of the projective plane $\mathbb{P}^2$  passing through the centers of a divisorial valuation $\nu$ of $\mathbb{P}^2$ with prescribed multiplicities, and an upper bound for the Seshadri-type constant of $\nu$, $\hat{\mu}(\nu)$, constant that is crucial in the Nagata-type valuative conjecture.
We also give some results related to the bounded negativity conjecture concerning those rational surfaces having the projective plane as a relatively minimal model.

\end{abstract}

\maketitle
\section{Introduction}
\label{intro}
The goals of this article are twofold. The first one is, on the one hand, to determine a lower bound on the degree of curves of the projective plane $\mathbb{P}^2$ going through a simple chain of infinitely near points with prescribed multiplicities. On the other hand, since this chain provides a divisorial valuation $\nu$ of $\mathbb{P}^2$, we obtain an upper bound of the Seshadri-type constant $\hat{\mu}(\nu)$. The second goal consists of supplying bounds on values, proposed by Harbourne in \cite{Harb1}, that give an asymptotic approach to bounded negativity on rational surfaces whose relatively minimal model is $\mathbb{P}^2$. For proving our results, we note that any divisorial valuation of the projective plane can also be regarded as a non-positive at infinity valuation of a suitable Hirzebruch surface and use the fact that the last mentioned class of valuations, introduced in \cite{GalMonMor}, determines rational surfaces with nice geometrical properties. This last argument gives a nexus between our goals.

These goals are related to interesting open conjectures. Nagata's conjecture on linear systems states that, given $n\geq 10$ very general points $p_1,\ldots,p_n$ of the complex projective plane and $n$ non-negative integers $m_1,\ldots,m_n$, the degree $d$ of any curve $C$ of $\mathbb{P}^2$ such that ${\rm mult}_{p_i}(C)\geq m_i$ for all $i=1,\ldots,n$ satisfies $d>\frac{1}{\sqrt{n}}\sum_{i=1}^n m_i$ \cite{Nagata}. In Section \ref{sec3} we consider a similar problem but within a different framework, where the points $p_i$ form the configuration of centers of an arbitrary divisorial valuation $\nu$ centered at a closed point $p_1$ of $\mathbb{P}^2$. Then, our first main result gives a bound on the degree $d$ which depends on the prescribed multiplicities, the maximum $t(\nu)$ of the set
$\{\nu(\varphi_S)\mid \mbox{$S$ is a line of $\mathbb{P}^2$} \}$,  where $\varphi_S$ stands for  an element of the local ring ${\mathcal O}_{\mathbb{P}^2,p_1}$ giving rise to a local equation of $S$, and combinatorial data that can be obtained from the dual graph of $\nu$.  These combinatorial data are $\overline{\beta}_0(\nu)$, which is the value (by $\nu$) of a general element of the maximal ideal $\mathfrak{m}$ of ${\mathcal O}_{\mathbb{P}^2,p_1}$, and the volume $\mathrm{vol}(\nu)$ of the valuation. Specifically, our result ({\it Theorem} \ref{th1} in the paper) is the following.
\begin{A} {\it
Let $\nu$ be a divisorial valuation of $\mathbb{P}^2$ whose configuration of centers is ${\mathcal C}_{\nu} = \{p_1, \ldots, p_n\}$. Set non-negative integers $m_1,\ldots,m_n$ and let $C$ be a curve of $\mathbb{P}^2$  passing through $({\mathcal C}_{\nu},(m_i)_{i=1}^n)$ (see Definition \ref{def31}). Then
$$\deg(C)\geq \frac{1}{\betabarra_0(\nu)+\left(1+ \delta_0(\nu)\right) t(\nu)}\sum_{i=1}^n v_i m_i,$$
where $(v_i)_{i=1}^n$ is the sequence of multiplicities of $\nu$,   $\delta_0(\nu)=-1$ if $\nu$ is the $\mathfrak{m}$-adic valuation and, otherwise,
$$\delta_0(\nu):= \left\lceil\frac{{\rm vol}(\nu)^{-1}-2\betabarra_0(\nu) t(\nu)}{t(\nu)^2} \right\rceil^+,$$
$\lceil x \rceil^+$ being the ceiling of a rational number $x$ if $x\geq 0$, and $0$ otherwise.}
\end{A}
The dual graph and some equivalent combinatorial data are useful tools for the study of plane valuations and related problems (see for instance \cite{Spiv, CDG-1, CDG-2}).

A {\it Nagata-type conjecture} within the framework of valuation theory was stated in  \cite{GalMonMoy} (see also \cite{DumHarKurRoeSze}). It implies the Nagata classical conjecture and states that any very general divisorial (or irrational) valuation centered at a closed point of the projective plane whose inverse normalized volume is larger than or equal to $9$ is minimal. A divisorial valuation $\nu$ is minimal when the square of the Seshadri-type constant $\hat{\mu}(\nu)$ equals the inverse of the volume of $\nu$.  The value $\hat{\mu}(\nu)$ was introduced in greater generality in \cite{BouKurMacSze} and, in our context, $\hat{\mu}(\nu) = \lim_{d \rightarrow \infty} \frac{\mu_d(\nu)}{d}$, where $\mu_d(\nu):= \max \{ \nu(h) \;| \; h \in k[u,v] \mbox{ and } \deg(h) \leq d \}$, $\{u,v\}$ being affine coordinates around the point $p_1$. The Seshadri-type constant for divisorial valuations is, generally speaking, very difficult to compute and it can be  found in different contexts as, for instance, the computation of Newton-Okounkov bodies of surfaces (see \cite{LazMus, KavKho} as introductory papers and \cite{CilFarKurLozRoeShr,GalMonMoyNic2,GalMonMor2} for the two-dimensional case).

As a consequence of Theorem A,
in {\it Corollary} \ref{muuu} we consider any divisorial valuation $\nu$ of $\mathbb{P}^2$ and, with the above notation, we complete our first goal by  providing the following bound on the value $\hat{\mu}(\nu)$:
$$\hat{\mu}(\nu)\leq \betabarra_0(\nu)+\left(1+ \delta_0(\nu)\right) t(\nu).$$
As we explained, this bound depends on combinatorial data that can be extracted from the dual graph of the valuation $\nu$ and the value (by $\nu$) of the germ of a suitable line of $\mathbb{P}^2$. In addition, we give a sequence of divisorial valuations $(\nu_n)$ such that the sequence of bounds (of $\hat{\mu} (\nu_n)$) provided in Corollary \ref{muuu} approaches asymptotically the actual values $\hat{\mu} (\nu_n)$.

Section \ref{sec4} is devoted to prove our second goal on the  asymptotic approach to bounded negativity. The {\it bounded negativity conjecture} is a folklore conjecture which states that any smooth projective surface $Z$ over the complex numbers has bounded negativity, which means that there is a non-negative integer $b(Z)$, depending only on $Z$, such that $C^2 \geq - b(Z)$ for any reduced and irreducible curve on $Z$ \cite{Harb1,bauer2013, bauer2015, Rou, Pok-Roe}. The origin of this conjecture is unclear but it was already on the mind of important mathematicians as Artin and Enriques  (see more details in the introduction of \cite{bauer2013}). It is worth to add that the conjecture (if true) gives a partial answer to a question by Demailly \cite[Question 6.9]{Dem} and  fails in positive characteristic \cite{Harb1,bauer2013}. A close conjecture, which implies the Nagata conjecture and is implied by the SHGH conjecture \cite{Cil}, states that if $C$ is a reduced and irreducible curve on the surface obtained by blowing-up the projective plane at very general points, then  $C^2 \geq -1$ and, if $C$  satisfies $C^2 = -1$, then it is a $(-1)$-curve. A weak bounded negativity conjecture was stated in \cite{Bau2, bauer2013} and proved in \cite{Hao}; this conjecture asserts that for any smooth complex projective surface $Z$ and any integer $g$, there is a non-negative integer $b(Z,g)$, depending only on $Z$ and $g$, such that $C^2 \geq -b(Z,g)$ for any reduced curve in $Z$ with geometric genus of each irreducible component less than or equal to $g$. The bounded negativity conjecture can also be stated by replacing reduced and irreducible curves by arbitrary reduced divisors \cite{bauer2013}. Moreover, if $Z$ is a surface coming from successive blowups of a surface $X$, for obtaining very negative curves on $Z$, it is worthwhile to consider curves on $Z$ giving very singular images in $X$. In this way, several recent papers \cite{bauer2015,Pokora2,Rou, Pokora} have considered special types of transversal arrangements of curves on $X$, that is, reduced divisors whose components are smooth and intersect pairwise transversally (the intersection points provide the singularities).
In this article we take a different approach to force the emergence of singularities by considering proximity relations among infinitely near points.


Let us state our main result in Section \ref{sec4}, {\it Theorem} \ref{th2} in the paper, which considers a surface $X_{\nu}$ obtained by blowing-up at the sequence of centers of a divisorial valuation $\nu$ of the projective plane. It uses the concept of tangent line of $\nu$ introduced before Section \ref{hirze}. 
\begin{B} {\it
Let $\nu$ be a divisorial valuation of $\mathbb{P}^2$ and set $Z:=X_{\nu}$. If $C$ is an integral curve of $\mathbb{P}^2$ different from the {tangent line}  of $\nu$ (if it exists) then
$$\frac{\tilde{C}^2}{\deg(C)^2}\geq -(1+\delta_0(\nu)),$$
where $\tilde{C}$ is the strict transform of $C$ on $Z$ and $\delta_0(\nu)$ is as in Theorem A.}
\end{B}

{\it Corollary} \ref{c1} gives a more general bound which holds for curves $\tilde{C}$ on any smooth rational surface having $\mathbb{P}^2$ as a relatively minimal model.

Since no general lower bound on the self-intersection of negative curves is known, in \cite[Section 1.3]{Harb1} (see also \cite[Conjecture 3.7.1]{Bau2}) it is proposed an {\it asymptotic approach.}  Given a nef (big and nef, according to \cite{Bau2}) divisor  $F$ on $Z$, this approach asks for a lower bound on the values $C^2/(F\cdot C)^2$, where $C$ runs over the integral curves on $Z$ with $F\cdot C>0$.
In {\it Corollary} \ref{cor} we provide such a bound for smooth rational surfaces $Z$ having $\mathbb{P}^2$ as a relatively minimal model and divisors $F=L^*$, where $L^*$ is the total transform on $Z$ of a general line $L$ of $\mathbb{P}^2$. {\it Corollary} \ref{cor1} gives, in the case of surfaces $X_{\nu}$ attached to a unique divisorial valuation of $\mathbb{P}^2$, a slightly rougher bound that depends only on purely combinatorial information associated with $\nu$. Finally, {\it Corollary} \ref{cor2} shows the existence of infinite families of rational surfaces having $\mathbb{P}^2$ as a relatively minimal model, with an arbitrarily big Picard number, that share the same bound as given in Corollary \ref{cor}.

Most of the literature on this subject only considers particular cases of blowups of surfaces. However, our results 
allow blowups at any family of infinitely near points.

It only remains to add that Section \ref{sec2} is expository and is devoted to present the main concepts and tools that we will use to prove our results. All the results of this paper are valid for surfaces over an algebraically closed field $k$, without any further assumption on the characteristic.


\section{Divisorial valuations}
\label{sec2}
\subsection{Plane divisorial valuations}
\label{plane}


Let $(R,\mathfrak{m})$ be a regular local ring of dimension $2$ and $K$ its quotient field. A valuation $\nu$ of $K$ is centered at $\mathfrak{m}$ when $R\cap\mathfrak{m}_\nu=\mathfrak{m}$ where $\mathfrak{m}_\nu=\{h\in K^* \ | \ \nu(h)>0\} \cup \{0\}$ and $K^*=K \setminus \{0\}$. The set of these valuations is bijective to the set of simple sequences of point blowups starting with the blowup $\pi_1$ of Spec$R$ at the closed point $p$ defined by $\mathfrak{m}$ \cite{Spiv}. A sequence of point blowups is simple when each blowup center belongs to the exceptional divisor created by the previous blowup.

Divisorial valuations $\nu$ are those defining (and defined by) finite simple sequences of the form
\begin{equation}\label{Eq_sequencepointblowingups}
\pi: Z_n\xrightarrow{\pi_n} Z_{n-1}\rightarrow \cdots \rightarrow Z_1 \xrightarrow{\pi_1} Z_0=\text{Spec}R,
\end{equation}
where, we set $p=p_1$ and $\pi_{i+1}$, $1 \leq i \leq n-1$, is the blowup of  $Z_i$ at the unique point $p_{i+1}$ in the exceptional divisor $E_i$ created by $\pi_i$ such that $\nu$ is centered at the local ring $\mathcal{O}_{Z_i,p_{i+1}}$. A center $p_i$ precedes another center $p_j$ if $p_i$ is mapped to $p_j$ by the composition of the associated sequence of blowups. The proximity relation on the set of centers of $\nu$ (or $\pi$), $\mathcal{C}_\nu=\{p=p_1, \ldots, p_n\}$,  works as follows: $p_i\to p_j$ ($p_i$ is proximate to $p_j$) if $i>j$ and $p_i$ belongs to the strict transform of $E_j$ on $Z_{i-1}$, also represented by $E_j$. Moreover, a center $p_i$ (or an exceptional divisor $E_i$) of $\pi$ is named satellite whenever $p_i \to p_j$ for some $j<i-1$. Otherwise, points and exceptional divisors are called free.

Many interesting properties of $\nu$ are encoded in the {\it dual graph} of $\nu$, $\Gamma_\nu$, which is the labelled tree whose vertices represent the exceptional divisors $E_i$ (labelled with $i$) and whose edges join the vertices $i$ and $j$ in case  $E_i \cap E_j \not = \emptyset$.

The set ${\mathcal C}_{\nu}$ is called the \emph{configuration of centers} of $\nu$ and it can be written as a union of sets
$$
\mathcal{C}_\nu= \mathcal{C}_1 \cup \cdots \cup \mathcal{C}_{g+1},
$$
where $g=0$ when all the points in ${\mathcal C}_{\nu}$ are free and $g>0$ otherwise. In this last case $\mathcal{C}_j=\{p_{\ell_{j-1}}, \ldots, p_{\ell_j}\}$, $j=1,\ldots,g$, are such that $\ell_0:=1$, the indices of the centers in each $\mathcal{C}_j$ are consecutive, there exists $r_j\in \{\ell_{j-1}+1,\ldots,\ell_j-1\}$ such that $p_i$ is free (respectively, satellite) if $\ell_{j-1}< i \leq r_j$ (respectively, $r_j <  i \leq \ell_j$), and the last set ${\mathcal C}_{g+1}$  consists of $p_{\ell_g}$ and a sequence (possibly empty) of free points $p_{\ell_g +1},\ldots,p_n$. Notice that the centers $p_{\ell_j+1}$, $1 \leq j \leq g$, are the first free points after a block of satellite points and that $p_{\ell_g +1}$ exists only when $n \neq \ell_g$.


Denote by $\mathfrak{m}_i$ the maximal ideal of the local ring $\mathcal{O}_{Z_i,p_{i+1}}$, $0 \leq i \leq n-1$, and define $\nu(\mathfrak{m}_i):=\min\{\nu(x) \,|\, x\in\mathfrak{m}_i\setminus \{0\}\}$. The sequence of {\it Puiseux exponents} of $\nu$ is an ordered set of rational numbers $\left(\beta_j'(\nu)\right)_{j=0}^{g+1}$ such that $\beta_0'(\nu)=1$ and, for $j \in \{1, \ldots, g+1\}$, $\beta_j'(\nu)$ is defined by the continued fraction
\[
\langle a_1^j;a_2^j,\ldots,a_{s_j}^j \rangle,
\]
where $a_k^j$, $1 \leq k \leq s_j$, successively counts the number of points in $\mathcal{C}_j$ with the same value $\nu(\mathfrak{m}_i)$. Notice that $\beta_{g+1}'(\nu)$ is a positive integer.

The ordered set $(\nu(\mathfrak{m}_i))_{i=1}^n$ is called the \emph{sequence of multiplicities} of $\nu$.
Following \cite{Spiv} (or \cite{DelGalNun}), one can deduce that, for a divisorial valuation $\nu$, its dual graph $\Gamma_\nu$ and its sequence of Puiseux exponents $\left(\beta_j'(\nu)\right)_{j=0}^{g+1}$ are equivalent data.

The sequence of {\it maximal contact values} of $\nu$, $\left(\overline{\beta}_j(\nu)\right)_{j=0}^{g+1}$,
has an important role in this paper. Its values can be defined as follows:
\begin{equation}\label{Eq_betabarravaluationcurvettes}
\overline{\beta}_0(\nu)= \nu(\mathfrak{m}) \text{ and } \overline{\beta}_j(\nu) = \nu(\varphi_{r_j}), 0\leq j\leq g+1 \text{ (with }r_0:=1),
\end{equation}
where $\varphi_{r_j}$ is an analytically irreducible element of $R$ whose strict transform on $Z_{r_j}$ is non-singular and transversal to the exceptional divisor $E_{r_j}$ at a general point and $p_{r_j}$ is the last free point in $\mathcal{C}_j$. Notice that when $\mathcal{C}_{g+1} = \{p_{\ell_g}\}$, $\betabarra_{g+1} = \nu (\varphi_{\ell_g})$. This sequence generates the semigroup of values of $\nu$, $\nu(R\setminus\{0\})\cup\{0\}$. It can be computed from the sequence of Puiseux exponents of $\nu$ and conversely. The last maximal contact value, $\betabarra_{g+1}$, has the following expression in terms of the sequence of multiplicities:
\begin{equation}\label{bb}
\betabarra_{g+1}=\sum_{i=1}^n \nu(\mathfrak{m}_i)^2.
\end{equation}
 More details can be found in \cite{Spiv,DelGalNun}.

The {\it volume} of $\nu$ (see \cite{EinLazSmi}) is defined as
\begin{equation}
\label{volume}
\mathrm{vol}(\nu) := \limsup_{m \rightarrow \infty} \frac{2 \; \mathrm{length}(R/\mathcal{P}_{m})}{m^2},
\end{equation}
where $\mathcal{P}_{m}:= \{h \in R | \nu(h) \geq m\} \cup \{0\}$.
Using \cite[Section 4.7]{Cas}, one can prove that $\mathrm{vol}(\nu)= 1/ \overline{\beta}_{g+1}(\nu)$ (see \cite{GalMonMoy}).

Sometimes it is useful to consider the so-called {\it normalized valuation of $\nu$}, which is defined as $\nu^N:= (1/\overline{\beta}_0(\nu)) \nu$ and gives rise to the {\it normalized volume} $$\mathrm{vol}^N(\nu):= \mathrm{vol}(\nu^N)=\frac{\overline{\beta}_0(\nu)^2}{\overline{\beta}_{g+1}(\nu)}.$$


\subsection{Divisorial valuations of $\mathbb{P}^2$}
\label{divp2}
Let $k$ be an algebraically closed field. If $X$ is a smooth projective surface over $k$, $p$ is a closed point of $X$, and $\nu$ is a divisorial valuation of the function field of $X$, centered at the maximal ideal $\mathfrak{m}$ of  ${\mathcal O}_{X,p}$, the triple $(\nu,X,p)$ is named \emph{divisorial valuation of $X$ centered at $p$}, although most of the times we simply say that \emph{$\nu$ is a divisorial valuation of $X$}.

Throughout the paper, given a divisorial valuation $(\nu,X,p)$ of a surface $X$ as above, and a curve $C$ of $X$, we denote by $\varphi_C$ the germ at $p$ defined by the image of $C$ in $R:={\mathcal O}_{X,p}$. Abusing the notation, $\varphi_C$  also denotes an element in ${\mathcal O}_{X,p}$ defining the germ. Divisorial valuations $\nu$ of $X$ correspond one-to-one to simple complete $\mathfrak{m}$-primary ideals $\mathcal{I}_\nu$ of ${\mathcal O}_{X,p}$ \cite[Theorem 4.3]{Spiv}. Abusing the notation again, in this paper a general element of $\mathcal{I}_\nu$ is named \emph{a general element} of $\nu$. Moreover, a germ $\psi$ at $p$ is a general element of $\nu$ if it is defined by an element of ${\mathcal O}_{X,p}$ which is irreducible in the $\mathfrak{m}$-adic completion $\hat{{\mathcal O}}_{X,p}$ and its strict transform (by the sequence (\ref{Eq_sequencepointblowingups}) provided by $\nu$) meets the divisor $E_n$ at a general point \cite[Definition 7.1]{Spiv}.

Consider the projective plane  over $k$, which we denote by $\mathbb{P}^2$. Fix a closed point $p \in \mathbb{P}^2$ and consider a divisorial valuation $\nu$ of $\mathbb{P}^2$ centered at $p$.  According to the above subsection, $\nu$ defines a rational surface $X_\nu$ through the sequence $\pi$ of blowups given by $\nu$:
\begin{equation}\label{Eq_sequencepointblowingupsP}
\pi: X_{\nu}=X_n\xrightarrow{\pi_n} X_{n-1}\rightarrow \cdots \rightarrow Z_1 \xrightarrow{\pi_1} X_0= \mathbb{P}^2.
\end{equation}

The Seshadri-type constant $\hat{\mu}(\nu)$ was introduced in \cite{BouKurMacSze} for real valuations $\nu$ of projective normal varieties. In our context, if $L$ denotes a general line of  $\mathbb{P}^2$ and  $a(mL)$ is the last value of the vanishing sequence of $H^0(mL)$ along $\nu$, then $\hat{\mu}(\nu) : = \lim_{m \rightarrow \infty} m^{-1} a(mL)$.


 Consider  projective coordinates $(X:Y:Z)$ on $\mathbb{P}^2$ and, without loss of generality, set $p=(1:0:0)$. Take the open subset $U_X:=\mathrm{Spec}(k[Y/X,Z/X])$ defined by the complement of the line with equation $X=0$ and, considering indeterminates $u$ and $v$, identify Spec$(k[u,v])$ with $U_X$ via the isomorphism
$k[u,v]\rightarrow k[Y/X,Z/X]$ defined by $u\mapsto Y/X$, $v\mapsto Z/X$. Then the valuation $\nu$ can be regarded as a valuation of the quotient field of $k[u,v]$ centered at the local ring $k[u,v]_{(u,v)}$.  The value $\hat{\mu}(\nu)$ can be computed as
$$
\hat{\mu}(\nu) = \lim_{d \rightarrow \infty} \frac{\mu_d(\nu)}{d},
$$
where, for each positive integer $d$,
\[
\mu_d(\nu):= \max \{ \nu(h) \;| \; h \in k[u,v] \mbox{ and } \deg(h) \leq d \}.
\]
(See \cite[Sections 2.5 and 2.6]{GalMonMoy}).



A divisorial valuation $\nu$ of $\mathbb{P}^2$  always satisfies $\hat{\mu}(\nu) \geq \sqrt{[\mathrm{vol}(\nu)]^{-1}}$ and $\nu$ is called {\it minimal} when the equality in the above expression holds. The minimality is a key concept for the Nagata-type valuative conjecture \cite{GalMonMoy} (which implies the Nagata conjecture) and states that, if $k=\mathbb{C}$, $\nu$ is a very general valuation and $[\mathrm{vol}^N(\nu)]^{-1} \geq 9$, then $\nu$ is minimal (see \cite{DumHarKurRoeSze} for the particular case of valuations where $g=0$, or $g=1$ and are defined by a satellite divisor). We conclude this section by recalling that each non-minimal divisorial valuation admits a unique supraminimal curve, that is, a curve defined by an irreducible polynomial $h \in k[u,v]$ such that $\hat{\mu}(\nu) = \nu(h)/ \deg(h)$. A proof of this fact is given in \cite[Section 3.4]{GalMonMor2}, and in \cite{DumHarKurRoeSze} for the above mentioned particular case. These proofs were given for the case $k=\mathbb{C}$, but they are valid for any algebraically closed field $k$.

Notice that $\nu$ is the $\mathfrak{m}$-adic valuation if and only if ${\mathcal C}_{\nu}$ has only one point and that, otherwise, there exists a unique projective line $H$, which we call \emph{tangent line} of $\nu$, such that $\nu(\varphi_H)>\betabarra_0$.

\subsection{Non-positive at infinity special divisorial valuations of $\mathbb{F}_\delta$}
\label{hirze}
As above, $k$ stands for an algebraically closed field. Set $\mathbb{P}^1$ the projective line over $k$. We denote by $\mathbb{F}_\delta : =\mathbb{P}(\mathcal{O}_{\mathbb{P}^1}\oplus\mathcal{O}_{\mathbb{P}^1}(-\delta))$ the $\delta$th \emph{Hirzebruch surface over the field $k$}, where $\delta \geq 0$ is an integer.

Following \cite[Section 2.2]{Reid}, $\mathbb{F}_\delta = (\mathbb{A}_k^{2*})^2/\sim$, where $\mathbb{A}_k^{2*}:= \mathbb{A}_k^2 \setminus \{(0,0)\}$,  $\mathbb{A}_k^2$ is the affine plane over $k$ and $\sim$ is given by the action of $k^* \times k^*$, $k^*:= k \setminus \{0\}$, defined as follows:
\begin{align*}
(\lambda,\mu) (X_0,X_1;Y_0,Y_1) = (\lambda X_0,\lambda X_1; \mu Y_0,\lambda^{-\delta}\mu Y_1) \\
\mbox{and} \;\; (\lambda, \mu) \in k^* \times k^*.
\end{align*}
Notice that the affine open sets  $U_{ij}:=\mathbb{F}_\delta\setminus \textbf{V}(X_iY_j)$, $0\leq i,j\leq 1$, cover the Hirzebruch surface.

In this article we consider two prime divisors $F$ and $M$ on $\mathbb{F}_\delta$ such that $F \cdot M=1$,  $F^2=0$, $M^2=\delta$ and their classes generate the Picard group $\text{Pic}(\mathbb{F}_\delta)$. When $\delta >0$,  the unique prime divisor of $\mathbb{F}_\delta$ with self-intersection $-\delta$ (called {\it special section}) will be denoted by $M_0$, and those points on $M_0$ are named special. When $\delta =0$, $M_0$ denotes a prime divisor linearly equivalent to $M$.

Fix a point $p \in  \mathbb{F}_\delta$ and let $\nu$ be a divisorial valuation of $\mathbb{F}_{\delta}$ centered at $p$.
As mentioned before, the valuation $\nu$ determines a unique sequence of blowups and then a unique rational surface $Y_\nu$:
\begin{equation}\label{Eq_sequencepointblowingupsF}
\pi: Y_{\nu}:=Y_n\xrightarrow{\pi_n} Y_{n-1}\rightarrow \cdots \rightarrow Y_1 \xrightarrow{\pi_1} Y_0= \mathbb{F}_\delta.
\end{equation}

The following concept was introduced in \cite[Definition 3.1]{GalMonMor}.
 \begin{definition}
\label{SG}
 The valuation $\nu$ is said to be {\it special} (with respect to $\mathbb{F}_\delta$ and $p$) when one of the following conditions holds:
\begin{enumerate}
\item $\delta = 0$.
\item $\delta >0$ and $p$ is a special point.
\item $\delta >0$, $p$ is not a special point and there is no integral curve in the complete linear system $|M|$ whose strict transform on $Y_\nu$ has negative self-intersection.
\end{enumerate}
\end{definition}

We are only interested in a particular type of special divisorial valuations of $\mathbb{F}_\delta$ which was introduced in \cite[Definition 3.5]{GalMonMor}. As we will see in the forthcoming Theorem \ref{Th1_caso_especial}, these valuations give rise to rational surfaces with nice geometrical properties (see also \cite{GalMonMor-Res}). Let us recall the definition; set $F_1$ the fiber of the projection morphism $\mathbb{F}_\delta \rightarrow \mathbb{P}^1$ that goes through $p$.

\begin{definition}
\label{defnonposspecial}
A special divisorial valuation $\nu$ of  $\mathbb{F}_\delta$  is called {\it non-positive} {\it at infinity} whenever $\nu(h)\leq 0$ for all $h\in\mathcal{O}_{\mathbb{F}_\delta}(\mathbb{F}_\delta\setminus (F_1\cup M_0))$.
\end{definition}

Let $Y_{\nu}$ be the rational surface given by a special valuation $\nu$ of $\mathbb{F}_\delta$ (centered at $p$). Denote by $F^*$, $M^*$, and $E_i^*$, $1 \leq i \leq n$, the total transforms on $Y_\nu$ of the above introduced divisors $F$ and $M$, and the exceptional divisors appearing in the sequence (\ref{Eq_sequencepointblowingupsF}). Given two germs at $p$, $\varphi$ and $\varphi'$, we denote by $(\varphi,\varphi')_p$ (respectively, $\text{mult}_{p_i}(\varphi)$) the intersection multiplicity of $\varphi$ and $\varphi'$ at $p$ (respectively, the multiplicity of the strict transform of $\varphi$ at $p_i$). Consider the following divisor on $Y_{\nu}$:
\begin{equation}\label{ellanda}
\Lambda_n := a_nF^* + b_nM^* - \sum_{i=1}^n\text{mult}_{p_i}(\varphi_n)E_i^*,
\end{equation}
where $
a_n:=(\varphi_n,\varphi_{M_0})_p, \; b_n:=(\varphi_n,\varphi_{F_1})_p$ and $\varphi_n$ is a general element of the divisorial valuation defined by the divisor $E_n$. Then the following result, proved in \cite[Theorem 3.6]{GalMonMor}, holds.

\begin{theorem}\label{Th1_caso_especial}
Let $\nu$ be a special divisorial valuation of $\mathbb{F}_\delta$ centered at a point $p$.  Set $Y_\nu$ the rational surface that $\nu$ defines. Then the following conditions are equivalent:
\begin{itemize}
\item[(a)] The valuation $\nu$ is non-positive at infinity.
\item[(b)] The divisor $\Lambda_n$ is nef.
\item[(c)] The inequality $2a_nb_n+b_n^2\delta\geq [\text{\emph{vol}}(\nu)]^{-1}$ holds.
\item[(d)] The cone of curves $NE(Y_\nu)$ is generated by the classes of the strict transforms on $Y_\nu$ of the fiber passing through $p$, the special section and the irreducible exceptional divisors associated with the map $\pi$ given by $\nu$.
\end{itemize}
\end{theorem}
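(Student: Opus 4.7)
My plan is to prove the equivalences around condition (b): first (b)$\Leftrightarrow$(c) via a self-intersection calculation, then (a)$\Leftrightarrow$(b) via a direct identification of $\nu(h)$ as an intersection number on $Y_\nu$, and finally (b)$\Leftrightarrow$(d) via nef-cone/cone-of-curves duality. The unifying ingredient is the identity
\[
\Lambda_n^2 \;=\; 2a_n b_n + b_n^2\delta - [\mathrm{vol}(\nu)]^{-1},
\]
obtained by expanding $\Lambda_n^2$ in the basis $\{F^*,M^*,E_1^*,\ldots,E_n^*\}$ of $\mathrm{Pic}(Y_\nu)$ (with the pairing $F^{*2}=0$, $F^*\cdot M^*=1$, $M^{*2}=\delta$, $E_i^*\cdot E_j^*=-\delta_{ij}$), observing that the valuation attached to $E_n$ is $\nu$ itself, applying formula (\ref{bb}) to read $\sum_i\mathrm{mult}_{p_i}(\varphi_n)^2=\overline{\beta}_{g+1}(\nu)$, and using $\mathrm{vol}(\nu)=1/\overline{\beta}_{g+1}(\nu)$.

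\textbf{(b)$\Leftrightarrow$(c).} A nef divisor on a smooth projective surface always satisfies $D^2\geq 0$, so (b)$\Rightarrow$(c) is immediate from the above identity. For the converse I would check $\Lambda_n\cdot C\geq 0$ on each integral curve $C$ of $Y_\nu$: on an irreducible exceptional component $E_i$ the intersection collapses to $\mathrm{mult}_{p_i}(\varphi_n)-\sum_{p_j\to p_i}\mathrm{mult}_{p_j}(\varphi_n)\geq 0$, which is the proximity inequality for the general curvette $\varphi_n$; on the strict transforms $\tilde{F}_1$ and $\tilde{M}_0$ one computes $\Lambda_n\cdot\tilde{F}_1=b_n-(\varphi_n,\varphi_{F_1})_p=0$ and similarly $\Lambda_n\cdot\tilde{M}_0=a_n-(\varphi_n,\varphi_{M_0})_p=0$, directly from the definitions of $a_n$ and $b_n$; on the strict transform of any other integral curve of $\mathbb{F}_\delta$, non-negativity is forced by (c) via Hodge index, with the specialness hypothesis providing the crucial control on the possible negative-self-intersection curves in $|M|$.

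\textbf{(a)$\Leftrightarrow$(b).} Any $h\in\mathcal{O}_{\mathbb{F}_\delta}(\mathbb{F}_\delta\setminus(F_1\cup M_0))$ has divisor $\mathrm{div}(h)=D-\alpha F_1-\beta M_0$ with $D$ effective, supported away from $F_1\cup M_0$, and $\alpha,\beta\geq 0$. Using $\nu(\varphi_{F_1})=b_n$, $\nu(\varphi_{M_0})=a_n$, and the Noether-type identity $\nu(\psi)=(\varphi_n,\psi)_p$ for germs $\psi$ at $p$, the arithmetic collapses the expression for $\nu(h)$ to the clean form $\nu(h)=-\Lambda_n\cdot\tilde{D}$. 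Condition (a) is therefore equivalent to $\Lambda_n\cdot\tilde{D}\geq 0$ for all such $D$, which combined with the already-noted vanishings $\Lambda_n\cdot\tilde{F}_1=\Lambda_n\cdot\tilde{M}_0=0$ and the proximity-based $\Lambda_n\cdot E_i\geq 0$ is precisely nefness of $\Lambda_n$.

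\textbf{(b)$\Leftrightarrow$(d) and main obstacle.} By standard duality between the nef cone and the cone of curves, if $NE(Y_\nu)$ is generated by the classes listed in (d) then the intersections computed above make $\Lambda_n$ nef; conversely, nefness of $\Lambda_n$, its orthogonality to $\tilde{F}_1$ and $\tilde{M}_0$, and its strict positivity on any hypothetical extra extremal ray force $NE(Y_\nu)$ to coincide with the claimed cone. I expect the main obstacle to be this last step: ruling out extra extremal rays coming from strict transforms of integral curves of $\mathbb{F}_\delta$ other than $F_1$ and $M_0$. Here the specialness condition of Definition \ref{SG} is essential, since it precisely excludes those curves in $|M|$ whose strict transform on $Y_\nu$ would otherwise be negative and could contribute a new extremal ray outside the cone described in (d).
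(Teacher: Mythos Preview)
This theorem is not proved in the present paper; it is quoted from \cite[Theorem 3.6]{GalMonMor} and used as a black box, so there is no in-paper argument to compare your sketch against.

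Assessing your outline on its own merits: the identity $\Lambda_n^2=2a_nb_n+b_n^2\delta-[\mathrm{vol}(\nu)]^{-1}$, the verifications $\Lambda_n\cdot E_i\ge 0$ (proximity inequality for the curvette $\varphi_n$), $\Lambda_n\cdot\tilde F_1=\Lambda_n\cdot\tilde M_0=0$, and the identification $\nu(h)=-\Lambda_n\cdot\tilde D$ are all correct and yield the easy implications (b)$\Rightarrow$(c), (d)$\Rightarrow$(b), and (a)$\Leftrightarrow$(b). The substantive content lies in (c)$\Rightarrow$(b) (equivalently (c)$\Rightarrow$(d)), and here your sketch has a genuine gap. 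Invoking ``Hodge index'' is not an argument: $\Lambda_n^2\ge 0$ by itself does not force $\Lambda_n\cdot C\ge 0$ for an arbitrary integral curve $C$, and in your (b)$\Rightarrow$(d) direction a single nef class cannot pin down the entire cone of curves. What actually has to be established is that the only integral curves on $Y_\nu$ with negative self-intersection are $\tilde F_1$, $\tilde M_0$ and the $E_i$; this is a case analysis over the linear systems $|aF+bM|$ on $\mathbb F_\delta$ in which the three clauses of Definition~\ref{SG} are used in earnest (and not only for curves in $|M|$). Once that is known, one can finish, e.g.\ by the Cauchy--Schwarz estimate $\sum v_im_i\le(\sum v_i^2)^{1/2}(\sum m_i^2)^{1/2}$ together with $\sum v_i^2=[\mathrm{vol}(\nu)]^{-1}\le 2a_nb_n+b_n^2\delta$, $\sum m_i^2\le D^2=2ab+b^2\delta$, and the algebraic identity $(a_nb+b_na+b_nb\delta)^2-(2a_nb_n+b_n^2\delta)(2ab+b^2\delta)=(a_nb-b_na)^2\ge 0$. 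You correctly flag this step as the main obstacle, but the outline does not carry it out.
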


\section{A lower bound on the degree of a curve of $\mathbb{P}^2$ with prescribed multiplicities}
\label{sec3}


We start with a definition and a lemma which will be useful to state our first main result.

\begin{definition}\label{def31}
Let $\nu$ be a divisorial valuation of $\mathbb{P}^2$, ${\mathcal C}_{\nu}=\{p_1,\ldots,p_n\}$ its configuration of centers and $X_{\nu}$ the surface that it defines. Set non-negative integers $m_1,\ldots,m_n$. We say that a curve $C$ of $\mathbb{P}^2$ \emph{passes through} the pair $({\mathcal C}_{\nu},(m_i)_{i=1}^n)$
if $C^*-\sum_{i=1}^n m_i E_i^*\geq 0$, $C^*$ (respectively, $E_i^*$) being the total transform (pull-back) of the curve $C$ (respectively, the exceptional divisor $E_i$) by the associated sequence of blowups (\ref{Eq_sequencepointblowingupsP}).
\end{definition}

\begin{lemma}\label{bbb}
Let $\nu$ be a divisorial valuation of $\mathbb{P}^2$ and let ${\mathcal C}_{\nu}=\{p_1,\ldots,p_n\}$ be its associated configuration of centers. Fix non-negative integers $m_1,\ldots,m_n$ and
let $C$ be a curve of $\mathbb{P}^2$ passing through $({\mathcal C}_{\nu},(m_i)_{i=1}^n)$. Then
$$\nu(\varphi_C)\geq \sum_{i=1}^n v_i m_i,$$
where $(v_i)_{i=1}^n$ is the {sequence of multiplicities of $\nu$}.

\end{lemma}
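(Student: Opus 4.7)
The plan is to extract the inequality from the fact that $E_n$ must appear with non-negative multiplicity in any effective divisor on $X_\nu$ whose horizontal part contains no exceptional component.

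Set $e_i \defa \mathrm{mult}_{p_i}(\tilde{C}^{(i-1)})$, the multiplicity at $p_i$ of the strict transform of $C$ on $X_{i-1}$, so that $C^* = \tilde{C} + \sum_{i=1}^n e_i E_i^*$ on $X_\nu$. The hypothesis then reads
\[
D \defa C^* - \sum_{i=1}^n m_i E_i^* \;=\; \tilde{C} + \sum_{i=1}^n (e_i - m_i) E_i^* \;\geq\; 0.
\]
Since $\tilde{C}$ is the strict transform of a curve on $\mathbb{P}^2$, no $E_j$ appears in it; hence the coefficient of $E_n$ as a component of $D$ is non-negative and equals $\sum_{i=1}^n (e_i - m_i)\,c_i$, where $c_i$ denotes the coefficient of $E_n$ in the expansion of $E_i^*$ in the basis of irreducible exceptional divisors on $X_\nu$.

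The key identification is $c_i = v_i$. To see this, pick $x_i \in \mathfrak{m}_{p_i}$ general enough so that $\nu(x_i)=v_i$ and such that the strict transform of $\mathrm{div}(x_i)$ on $X_i$ meets $E_i$ transversally at a point different from $p_{i+1}$. Then $\pi^*\mathrm{div}(x_i)$ on $X_\nu$ decomposes as $E_i^*$ plus a strict transform which does not contain $E_n$; consequently, the coefficient of $E_n$ in $\pi^*\mathrm{div}(x_i)$ is, on one hand, equal to $c_i$, and on the other hand, by the very definition of $\nu$ as the order of vanishing along $E_n$, equal to $\nu(x_i)=v_i$. The same principle applied to $\varphi_C$ gives the Noether-type formula $\nu(\varphi_C) = \sum_{i=1}^n v_i e_i$: the coefficient of $E_n$ in $C^*=\pi^*\mathrm{div}(\varphi_C)$ is $\nu(\varphi_C)$, and using $C^*=\tilde{C}+\sum_i e_i E_i^*$ together with $c_i = v_i$ yields the identity.

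Combining the two displays gives
\[
0 \;\leq\; \sum_{i=1}^n (e_i - m_i)\,v_i \;=\; \nu(\varphi_C) - \sum_{i=1}^n v_i m_i,
\]
which is the desired bound. The only non-routine ingredient is the identification $c_i = v_i$, which bridges the combinatorics of the exceptional configuration on $X_\nu$ and the multiplicity sequence of $\nu$; this is classical in the theory of plane divisorial valuations (see, e.g., \cite{Cas,Spiv}) and is the one step where the plan appeals to a result beyond straightforward bookkeeping.
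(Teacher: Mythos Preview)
Your proof is correct. It differs from the paper's only in packaging: the paper applies the \emph{virtual Noether formula} \cite[4.1.3]{Cas} to a general element $\psi$ of $\nu$, obtaining in one line
\[
\nu(\varphi_C)=(\psi,\varphi_C)_{p_1}\geq \sum_{i=1}^n \mathrm{mult}_{p_i}(\psi)\,m_i=\sum_{i=1}^n v_i m_i,
\]
whereas you unpack that same inequality by reading off the coefficient of $E_n$ in the effective divisor $C^*-\sum m_i E_i^*$ and invoking the classical identification $\mathrm{ord}_{E_n}(E_i^*)=v_i$. Your route is more self-contained (it essentially re-proves the relevant instance of the virtual Noether formula), while the paper's is shorter by delegating to a standard reference; mathematically the two arguments are equivalent, since intersecting with a general element of $\nu$ is precisely computing the order along $E_n$.
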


\begin{proof}
Let $\psi$ be an element of the local ring ${\mathcal O}_{{\mathbb{P}^2,p_1}}$ defining a {general element} of the valuation $\nu$. By the virtual Noether formula \cite[4.1.3]{Cas} we have that
$${\nu(\varphi_C)=(\psi, \varphi_C)_{p_1}}\geq \sum_{i=1}^n {\rm mult}_{p_i}(\psi) m_i=\sum_{i=1}^n v_i m_i.$$

\end{proof}

For the reader's convenience, we recall some notation which has been introduced in Section \ref{intro}. Consider any divisorial valuation $\nu$ of $\mathbb{P}^2$ and let $p$ be the closed point of $\mathbb{P}^2$ where $\nu$ is centered. We define $t(\nu):=1$ and $\delta_0(\nu):=-1$ if $\nu$ is the $\mathfrak{m}$-adic valuation, $\mathfrak{m}$ being the maximal ideal of ${\mathcal O}_{\mathbb{P}^2,p}$. Otherwise $t(\nu):=\nu(\varphi_H)$,   $H$ being the tangent line of $\nu$,  and
$$\delta_0(\nu):= \left\lceil\frac{{\rm vol}(\nu)^{-1}-2\betabarra_0(\nu) t(\nu)}{t(\nu)^2} \right\rceil^+,$$
where $\lceil x \rceil^+$ is defined as the ceiling of a rational number $x$ if $x\geq 0$, and $0$ otherwise.

Next, we state our first main result (Theorem A in the introduction).

\begin{theorem}\label{th1}
Let $\nu$ be a divisorial valuation of $\mathbb{P}^2$ (centered at the closed point $p$) and let $X_\nu$ be the surface that $\nu$ defines.
Set non-negative integers $m_1,\ldots,m_n$ and let $C$ be a curve of $\mathbb{P}^2$  passing through $({\mathcal C}_{\nu},(m_i)_{i=1}^n)$. Then
$$\deg(C)\geq \frac{1}{\betabarra_0(\nu)+\left(1+ \delta_0(\nu)\right) t(\nu)}\sum_{i=1}^n v_i m_i,$$
where $(v_i)_{i=1}^n$ is the sequence of multiplicities of $\nu$.
\end{theorem}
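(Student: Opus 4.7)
The proof follows from Lemma~\ref{bbb} combined with an upper bound on $\nu(\varphi_C)$ obtained by realizing $\nu$ as a non-positive at infinity divisorial valuation on a suitable Hirzebruch surface and using Theorem~\ref{Th1_caso_especial}.

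\textbf{Reduction.} By Lemma~\ref{bbb}, $\sum_{i=1}^n v_i m_i \leq \nu(\varphi_C)$, so it suffices to prove
\[
\nu(\varphi_C) \leq \deg(C)\bigl(\betabarra_0(\nu)+(1+\delta_0(\nu))\,t(\nu)\bigr). \qquad (\star)
\]
If $\nu$ is the $\mathfrak{m}$-adic valuation, then $n=1$, $\betabarra_0=t=1$, $\delta_0=-1$, and $(\star)$ reduces to the trivial fact $\mathrm{mult}_p(C)\leq \deg(C)$. So I assume $\nu$ is not $\mathfrak{m}$-adic, and let $H$ denote the tangent line of $\nu$.

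\textbf{Hirzebruch realisation.} The plan is to view $\nu$ also as a special divisorial valuation on a Hirzebruch surface $\mathbb{F}_\delta$ via a birational map $\phi\colon \mathbb{F}_\delta \dashrightarrow \mathbb{P}^2$ chosen so that, for $\delta=\delta_0(\nu)$, condition~(c) of Theorem~\ref{Th1_caso_especial} holds. I would build $\phi$ through blow-ups along the tangent direction of $\nu$ combined with suitable elementary transformations, arranged so that (i) the center of $\nu$ on $\mathbb{F}_\delta$ is a special point (making $\nu$ special), and (ii) the intersection multiplicities from~(\ref{ellanda}) satisfy $a_n=\betabarra_0(\nu)$ and $b_n=t(\nu)$. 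With these choices, condition~(c) becomes
\[
2\,\betabarra_0(\nu)\,t(\nu)+\delta_0(\nu)\,t(\nu)^2 \geq [\mathrm{vol}(\nu)]^{-1},
\]
which is precisely the defining inequality of $\delta_0(\nu)$. Hence $\nu$ is non-positive at infinity on $\mathbb{F}_{\delta_0(\nu)}$ and, by Theorem~\ref{Th1_caso_especial}, the divisor $\Lambda_n$ from~(\ref{ellanda}) is nef on $Y_\nu$.

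\textbf{Conclusion via intersection.} Passing if necessary to a smooth surface dominating both $X_\nu$ and $Y_\nu$, I would express $\Lambda_n$ in the basis $\{L^*,E_1^*,\ldots,E_n^*\}$ of $\pic(X_\nu)$ by translating the $F^*$ and $M^*$ components through the birational map $\phi$. The outcome should be
\[
\Lambda_n = \bigl(\betabarra_0(\nu)+(1+\delta_0(\nu))\,t(\nu)\bigr)L^* - \sum_{i=1}^n v_i E_i^*.
\]
Writing $\tilde C = \deg(C)L^* - \sum_{i=1}^n \mu_i E_i^*$ with $\mu_i=\mathrm{mult}_{p_i}(C^{(i-1)})$ and reducing to the case of an irreducible $C$, nefness yields
\[
0 \leq \Lambda_n\cdot\tilde C = \deg(C)\bigl(\betabarra_0+(1+\delta_0)\,t\bigr) - \sum_{i=1}^n v_i\mu_i = \deg(C)\bigl(\betabarra_0+(1+\delta_0)\,t\bigr) - \nu(\varphi_C),
\]
which is $(\star)$.

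\textbf{Main obstacle.} The delicate step is the construction of $\phi$ and the verification that $a_n=\betabarra_0(\nu)$, $b_n=t(\nu)$, as well as that $\Lambda_n$ takes the claimed form in the $\{L^*,E_i^*\}$-basis, with $\betabarra_0+(1+\delta_0)t$ as the coefficient of $L^*$. This requires careful local-coordinate bookkeeping at the centers of $\nu$ through the elementary transformations building $\mathbb{F}_{\delta_0(\nu)}$. Once these pieces are in place, the verification of condition~(c) via the definition of $\delta_0(\nu)$ and the final intersection calculation are direct.
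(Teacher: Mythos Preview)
Your overall strategy---realise $\nu$ as a non-positive at infinity special valuation on $\mathbb{F}_{\delta_0(\nu)}$, invoke Theorem~\ref{Th1_caso_especial} to obtain nefness of $\Lambda_n$, and conclude by intersection---is exactly the paper's. The execution, however, differs in a way that bypasses precisely the obstacle you flag.

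First, the birational identification $\mathbb{P}^2\dashrightarrow\mathbb{F}_\delta$ is not built via blowups and elementary transformations. The paper simply identifies the affine chart $U_X\cong\mathrm{Spec}\,k[u,v]$ of $\mathbb{P}^2$ (arranged so that $u=0$ is the tangent line $H$) with the chart $U_{00}\cong\mathrm{Spec}\,k[u,v]$ of $\mathbb{F}_\delta$ via the identity on $k[u,v]$. This places $\nu$ at the special point $q=(1,0;1,0)$, and since $F_1$ has local equation $u=0$ and $M_0$ has local equation $v=0$, one reads off $\nu(\varphi_{F_1})=t(\nu)$ and $\nu(\varphi_{M_0})=\betabarra_0(\nu)$ at once; condition~(c) of Theorem~\ref{Th1_caso_especial} then holds for $\delta=\delta_0(\nu)$ by definition.

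Second, and more importantly, the paper never rewrites $\Lambda_n$ in the basis $\{L^*,E_i^*\}$; it transports the \emph{curve} instead of the divisor. The same affine equation $f(u,v)=0$ that cuts out $C$ on $U_X$ defines a curve $D$ on $U_{00}\subset\mathbb{F}_\delta$, and the intersection $\Lambda_n\cdot\tilde D$ is computed entirely on $Y_\nu$. The multiplicities of $D$ at the centers $p_i$ coincide with those of $C$ (the local rings are identified), while the bidegree $(a,b)$ of $D$ is bounded by $(\deg_u f,\deg_v f)$ via an elementary lemma (Lemma~\ref{lemanuevo}), hence by $\deg C$ in each slot. This yields
\[
\bigl[\betabarra_0(\nu)+(1+\delta_0(\nu))\,t(\nu)\bigr]\deg C-\nu(\varphi_C)\;\geq\;\Lambda_n\cdot\tilde D\;\geq\;0,
\]
which is your inequality $(\star)$. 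Your proposed expression for $\Lambda_n$ in the $L^*$-basis would require resolving the map at infinity and accounting for additional exceptional components; the paper's route makes this unnecessary.
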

\begin{proof}
If $\nu$ is the $\mathfrak{m}$-adic valuation, the result holds trivially. Otherwise, with the notation as in Subsection \ref{divp2}, and without loss of generality, we can assume the following three  conditions:
\begin{itemize}
\item[(1)] $p$ is the point $(1:0:0)\in U_X$.
\item[(2)] Using the isomorphism described in Subsection \ref{divp2}, $k[u,v]_{(u,v)}$ is identified with the local ring of $\mathbb{P}^2$ at $p$.
\item[(3)]  The local equation  of the line $H$ at $p$ is $u=0$.
\end{itemize}

Let $f(u,v)=0$ be an affine equation of the restriction of $C$ on $U_X$. The strict transform $\tilde{C}$ on $X_\nu$ is linearly equivalent to the divisor
$$\deg(C)L^*-\sum_{i=1}^n {\rm mult}_{p_i}(C) E_i^*,$$
where $L^*$ (respectively, $E_i^*$) denotes the total transform on $X_\nu$ of a general line $L$ of $\mathbb{P}^2$ (respectively, of the exceptional divisor $E_i$ created by $\pi_i$). 

Consider, for an arbitrary integer $\delta\geq 0$, the Hirzebruch surface $\mathbb{F }:=\mathbb{F}_\delta$ and  homogeneous coordinates $(X_0,X_1;Y_0,Y_1)$ as defined in Subsection \ref{hirze}. The open subset $U_{00}:=\mathrm{Spec}(k[X_1/X_0,X_0^\delta Y_1/Y_0])$ is defined by the complement of the curve on $\mathbb{F}$ with homogeneous equation $X_0\cdot Y_0=0$. We can identify Spec$(k[u,v])$ with $U_{00}$ via the isomorphism $k[u,v]\rightarrow k[X_1/X_0,X_0^\delta Y_1/Y_0]$ defined by $u\mapsto X_1/X_0$, $v\mapsto X_0^\delta Y_1/Y_0$. This isomorphism  gives rise to an isomorphism of function fields $\theta: K({\mathbb{P}^2})\rightarrow K(\mathbb{F})$ and thus the previous valuation $\nu$ of $\mathbb{P}^2$ can also be regarded as a valuation of $\mathbb{F}$ centered at the point $q:=(1,0;1,0)$. Then, its configuration of centers ${\mathcal C}_{\nu}$ becomes a configuration of infinitely near points over $\mathbb{F}$. Within this setting, $f(u,v)=0$ is the affine equation in $U_{00}$ of a curve $D$ on $\mathbb{F}$ that is linearly equivalent to a divisor $a(\delta,f)F+b(\delta,f)M$, where $a(\delta,f)$ and $b(\delta,f)$ are non-negative integers depending on $\delta$ and $f$, and $u=0$ (respectively, $v=0$) is the affine equation of the fiber $F_1$ that contains $p$ (respectively, $M_0$). By Theorem \ref{Th1_caso_especial}, the valuation $\nu$ is a \emph{non-positive} at infinity (special) valuation of $\mathbb{F}$ if and only if
\begin{equation}\label{a}
2\nu(\varphi_{M_0})\nu(\varphi_{F_1})+\nu(\varphi_{F_1})^2\delta \geq \left[{\rm vol}(\nu)\right]^{-1}.
\end{equation}
Observe that
$$\nu(\varphi_{M_0})=\betabarra_0(\nu)\;\;\mbox{and}\;\; \nu(\varphi_{F_1})=\nu(u)=t(\nu)$$
and then Inequality (\ref{a}) is true if $\delta=\delta_0(\nu)$.

In the rest of the proof, $\mathbb{F}$ will represent the Hirzebruch surface $\mathbb{F}_{\delta_0(\nu)}$ and $\nu$ will be regarded as a non-positive at infinity valuation of $\mathbb{F}$,
 $\pi: Y_\nu\rightarrow \mathbb{F}$ being the composition of the sequence of blowing-ups centered at the points of ${\mathcal C}_{\nu}$. There is no confusion if we denote the associated exceptional divisors and its total transforms on $Y_\nu$ as before, that is, $E_i$ and $E_i^*$. Then, since $\nu$ is non-positive at infinity, the divisor on $Y_\nu$
$$\Lambda_n:=\betabarra_0(\nu)F^*+t(\nu)M^*-\sum_{i=1}^n v_i E_i^*,$$ given in (\ref{ellanda}), is nef (by Theorem \ref{Th1_caso_especial}).
As a consequence, $\Lambda_n\cdot \tilde{D}\geq 0$, where $\tilde{D}$ denotes the strict transform on $Y_\nu$ of $D$ which is linearly equivalent to the divisor
$$a(\delta_0(\nu),f)F^*+b(\delta_0(\nu),f)M^*-\sum_{i=1}^n {\rm mult}_{p_i}(C) E_i^*.$$
Therefore, using the forthcoming Lemma \ref{lemanuevo},
\begin{multline*}
\left[\betabarra_0(\nu)+t(\nu)\left(1+\delta_0(\nu)\right)\right]\deg(C)-\nu(\varphi_C) \\ \geq  \betabarra_0 (\nu) \deg_v(f)+t(\nu)\deg_u(f)+t(\nu)\deg_v(f)\delta_0(\nu)- \sum_{i=1}^n v_i \; {\rm mult}_{p_i}(C) \geq \Lambda_n\cdot \tilde{D}\geq 0.
\end{multline*}
Hence

\begin{multline*}
\deg(C)\geq \frac{1}{\betabarra_0(\nu)+\left(1+ \delta_0(\nu)\right) t(\nu)}\sum_{i=1}^n v_i \; {\rm mult}_{p_i}(C) \\ \geq \frac{1}{\betabarra_0(\nu)+\left(1+ \delta_0(\nu)\right) t(\nu)}\sum_{i=1}^n v_i m_i,
\end{multline*}
where the last inequality holds by Lemma \ref{bbb}. This finishes the proof.

\end{proof}

\begin{lemma}\label{lemanuevo}
With the above notations, let $f(u,v)=0$ be the equation of a curve defined in the affine open set $U_{00}$ of $\mathbb{F}_{\delta}$. Suppose that its closure $D$ in $\mathbb{F}_{\delta}$ is linearly equivalent to $a(\delta,f)F+b(\delta,f)M$. Then $$a(\delta,f)\leq \deg_u(f)\;\;\mbox{ and }\;\; b(\delta,f)\leq \deg_v(f).$$
\end{lemma}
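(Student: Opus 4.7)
The plan is to write down explicitly the bihomogeneous polynomial cutting out the closure $D$ in $\mathbb{F}_\delta$, read off its bidegree in the basis $\{F,M\}$ of $\operatorname{Pic}(\mathbb{F}_\delta)$, and compare with $\deg_u(f)$ and $\deg_v(f)$. First, I would write $f(u,v)=\sum_{(i,j)} c_{ij} u^i v^j$ and substitute $u=X_1/X_0$, $v=X_0^\delta Y_1/Y_0$, so that after clearing denominators by multiplying by $X_0^A Y_0^B$ for suitable non-negative integers $A,B$, a typical monomial becomes $c_{ij} X_0^{A+\delta j-i} X_1^i Y_0^{B-j} Y_1^j$. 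Using the weights $(1,0)$ for $X_0,X_1$, $(0,1)$ for $Y_0$ and $(-\delta,1)$ for $Y_1$ (read off from the $k^*\times k^*$-action of Subsection~\ref{hirze}), one checks that each such monomial has bidegree $(A,B)$ independently of $(i,j)$, so the whole polynomial is bihomogeneous of bidegree $(A,B)$, and a divisor in this bidegree class is linearly equivalent to $AF+BM$.

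Second, I would take the minimal admissible choice, namely $A=\max\{i-\delta j\mid c_{ij}\ne 0\}$ (provided this is non-negative) and $B=\max\{j\mid c_{ij}\ne 0\}=\deg_v(f)$, which are exactly the conditions needed so that every exponent $A+\delta j-i$ and $B-j$ is non-negative. The monomial realizing the maximum in the definition of $A$ contributes a term with $X_0^0$, so the resulting polynomial is not divisible by $X_0$; similarly, the monomial with $j=\deg_v(f)$ forces a $Y_0^0$ term. Since $\{X_0=0\}$ and $\{Y_0=0\}$ are disjoint from $U_{00}$, this minimal bihomogenization is precisely the defining equation of the closure $D$, giving $a(\delta,f)=A$ and $b(\delta,f)=B$.

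Finally, because $\delta\geq 0$ and $j\geq 0$ imply $i-\delta j\leq i$, one gets
\[
a(\delta,f)=\max\{i-\delta j\mid c_{ij}\ne 0\}\leq \max\{i\mid c_{ij}\ne 0\}=\deg_u(f),
\]
while $b(\delta,f)=\deg_v(f)$ trivially. I expect the only non-routine point to be the edge case in which $\max\{i-\delta j\}$ is strictly negative: then taking $A=0$ forces every exponent of $X_0$ to be strictly positive, so the bihomogenization picks up a factor $X_0^{-\max\{i-\delta j\}}$. One has to note that $\{X_0=0\}$ is a fiber of class $F$, so stripping off this factor corresponds to subtracting a multiple of $F$ from the class, and the resulting $a(\delta,f)$ coincides with $\max\{i-\delta j\}$, which is in particular $\leq \deg_u(f)$; so the stated bounds hold regardless.
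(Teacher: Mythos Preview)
Your argument is correct and follows essentially the same approach as the paper: both identify the bihomogeneous equation of $D$ in the Cox coordinates $(X_0,X_1;Y_0,Y_1)$ and read the bidegree off from its monomials. The paper's proof is terser---it starts from the bihomogeneous polynomial, picks a monomial with $X_0$-exponent zero (respectively $Y_0$-exponent zero) and bounds $a(\delta,f)$ (respectively $b(\delta,f)$) directly---whereas you construct the bihomogenization explicitly from $f$ and in fact obtain the exact values $a(\delta,f)=\max\{i-\delta j:c_{ij}\ne 0\}$ and $b(\delta,f)=\deg_v(f)$; your edge-case discussion is thus a bit more than is needed, since the paper's formulation handles negative $a(\delta,f)$ uniformly.
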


\begin{proof}
Let $F(X_0,X_1,Y_0,Y_1)=0$ be an homogeneous equation of $D$ and consider the set $\mathcal{M}$ of monomials $$X_0^{\alpha}X_1^{\beta}Y_0^{\gamma}Y_1^{\mu}$$ appearing in the expression of $F(X_0,X_1,Y_0, Y_1)$ with non-zero coefficient. Since $X_0$ (respectively, $Y_0$) does not divide $F(X_0,X_1,Y_0,Y_1)$ there exists a monomial in $\mathcal{M}$ with $\alpha=0$ (respectively, $\gamma=0$); hence $\deg_u(f)\geq \beta\geq \beta-\delta \mu=a(\delta,f)$
(respectively, $\deg_v(f)\geq \mu=b(\delta,f)$).

\end{proof}

Theorem \ref{th1} provides a universal upper bound on the Seshadri-type constant $\hat{\mu}(\nu)$ of a divisorial valuation $\nu$ of $\mathbb{P}^2$. This bound depends only on the dual graph and the value (by $\nu$) of the tangent line of $\nu$. Let us state the result.

\begin{corollary}\label{muuu}
For any divisorial valuation $\nu$ of $\mathbb{P}^2$ it holds that
$$\hat{\mu}(\nu)\leq \betabarra_0(\nu)+\left(1+ \delta_0(\nu)\right) t(\nu).$$

\end{corollary}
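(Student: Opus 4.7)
The plan is to bootstrap this corollary directly from the body of the proof of Theorem \ref{th1}, rather than from its stated conclusion. The key point is that the chain of inequalities established inside that proof does more than bound $\deg(C)$ in terms of the prescribed multiplicities $m_i$: its very first step, before Lemma \ref{bbb} is invoked, gives an unconditional bound of $\nu(\varphi_C)$ in terms of $\deg(C)$ valid for \emph{every} curve $C$ of $\mathbb{P}^2$.

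More precisely, I would first isolate the inequality
\[
\nu(\varphi_C)\leq \bigl[\betabarra_0(\nu)+(1+\delta_0(\nu))\,t(\nu)\bigr]\deg(C),
\]
which is obtained by rearranging $\Lambda_n\cdot\tilde{D}\geq 0$ together with Lemma \ref{lemanuevo}, using that $\nu$ becomes a non-positive at infinity valuation on $\mathbb{F}_{\delta_0(\nu)}$ and that $\Lambda_n$ is consequently nef. Notice that the role of the $m_i$ (and hence of Lemma \ref{bbb}) only enters at the very end of the proof of Theorem \ref{th1}; for our purposes we do not need it at all, since we are not imposing prescribed multiplicities.

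Next I would translate this into a bound on $\mu_d(\nu)$. Fix $d\geq 1$ and pick any $h\in k[u,v]$ with $\deg(h)\leq d$. Under the identification of $k[u,v]_{(u,v)}$ with $\mathcal{O}_{\mathbb{P}^2,p}$ from Subsection \ref{divp2}, the polynomial $h$ is the local equation at $p$ of the projective closure $C_h\subset\mathbb{P}^2$ of the affine curve $\{h=0\}\subset U_X$, so $\nu(h)=\nu(\varphi_{C_h})$ and $\deg(C_h)=\deg(h)\leq d$. Applying the inequality above to $C_h$ yields
\[
\nu(h)\leq \bigl[\betabarra_0(\nu)+(1+\delta_0(\nu))\,t(\nu)\bigr]\,d,
\]
and taking the maximum over such $h$ gives $\mu_d(\nu)\leq \bigl[\betabarra_0(\nu)+(1+\delta_0(\nu))\,t(\nu)\bigr]\,d$. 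Dividing by $d$ and letting $d\to\infty$ in the formula $\hat{\mu}(\nu)=\lim_{d\to\infty}\mu_d(\nu)/d$ from Subsection \ref{divp2} yields the stated bound. The $\mathfrak{m}$-adic case is handled separately: there $\hat{\mu}(\nu)=1=\betabarra_0(\nu)+(1+\delta_0(\nu))t(\nu)$ since $t(\nu)=1$, $\betabarra_0(\nu)=1$ and $\delta_0(\nu)=-1$ by convention, so the inequality holds with equality.

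There is essentially no obstacle: the work was already done inside Theorem \ref{th1}. The one point that needs care is precisely the observation that Lemma \ref{bbb} is used in Theorem \ref{th1} only to pass from $\sum v_i\,\mathrm{mult}_{p_i}(C)$ to $\sum v_i m_i$, so removing the prescribed-multiplicity assumption still leaves the inequality between $\deg(C)$ and $\nu(\varphi_C)$ intact; this is what powers the corollary.
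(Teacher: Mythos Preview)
Your argument is correct and in line with the paper, which states the corollary without proof, indicating only that it follows from Theorem \ref{th1}. One small simplification: you need not re-enter the proof of Theorem \ref{th1} at all. Given any curve $C$ on $\mathbb{P}^2$, take $m_i:={\rm mult}_{p_i}(C)$; then $C^*-\sum_i m_i E_i^*=\tilde{C}\geq 0$, so $C$ passes through $({\mathcal C}_\nu,(m_i))$, and the \emph{statement} of Theorem \ref{th1} together with the standard identity $\nu(\varphi_C)=\sum_i v_i\,{\rm mult}_{p_i}(C)$ already yields
\[
\nu(\varphi_C)\leq \bigl[\betabarra_0(\nu)+(1+\delta_0(\nu))\,t(\nu)\bigr]\deg(C),
\]
from which the bound on $\hat{\mu}(\nu)=\lim_{d\to\infty}\mu_d(\nu)/d$ follows exactly as you wrote.
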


Generally speaking  it is a hard task to compute the value $\hat{\mu} (\nu)$. The next example considers a case where  $\hat{\mu} (\nu)$ can be obtained and gives a family of divisorial valuations $\{\nu_a\}_{a \in \mathbb{N}_{\geq 3}} $ such that the bounds of $\hat{\mu} (\nu_a)$ obtained in Corollary \ref{muuu} approach asymptotically the actual values $\hat{\mu} (\nu_a)$.

\begin{example}
\label{exa1}
Assume in this example that $k$ is the field of complex numbers. Let $e$ be any non-negative integer. For any  integer $a\geq 3$, let $C_a$ be the unicuspidal curve of $\mathbb{P}^2$ whose equation in the homogeneous coordinates $(X:Y:Z)$ is
\[
\frac{\left( f_1 Y + b X^{a+1} \right)^a - f_1^{a+1}}{X^{a-1}} = 0,
\]
where $f_1 = X^{a-1}Z + Y^a$ and $b\neq 0$. Notice that $C_a$ is a Tono curve of Type I with $n=a-1$ and $s=2$ (see \cite{Tono, Bobadilla}) whose degree is $a^2 +1$.

Consider the configuration of infinitely near points $\mathcal{C}=\{p_1,\ldots, p_n\}$ such that the composition $\pi:Y\rightarrow \mathbb{P}^2$ of the sequence of point blowups at $\mathcal{C}$ gives rise to a minimal embedded resolution of the singularity of $C_a$.   Now consider a sequence $q_1, \ldots,q_{s}$ of $s:=(e+1)a^4-2a^3-2a^2-a$ free infinitely near points belonging to the successive strict transforms of $C_a$ and such that $q_1$ (respectively, $q_i$) is proximate to $p_n$ (respectively, $q_{i-1}$ for $i=2,\ldots,n$). Set $\nu_a$ the divisorial valuation whose associated configuration is ${\mathcal C}_{\nu_a}={\mathcal C}\cup \{q_i \}_{i=1}^s$. The sequence of maximal contact values  of $\nu_a$ is $\bar{\beta}_{0}(\nu_a)= a^2-a$, $\bar{\beta}_{1}(\nu_a)= a^2$, $\bar{\beta}_{2}(\nu_a)= a^3+2a+1$ and $\bar{\beta}_{3}(\nu_a)=(e+2)a^4-2a^3$.

Then, on the one hand, the curve $C_a$ is a supraminimal curve of the the valuation $\nu_a$ \cite[Definition 2.3]{GalMonMoy}  because $\nu_a(\varphi_{C_a}) > \sqrt{\bar{\beta}_{3}(\nu_a)} \deg(C_a)$ which means \cite[Lemma 3.10]{GalMonMoyNic2} that
\[
\hat{\mu} (\nu_a) = \frac{\nu_a(\varphi_{C_a})}{\deg(C_a)}= \frac{(e+2)a^4-2a^3}{a^2 +1}.
\]

On the other hand, the bound in Corollary \ref{muuu} is $\bar{\beta}_{0}(\nu_a) + (e+1) t(\nu_a) = (e+2)a^2-a$ because $\delta_0(\nu_a) =e$. Therefore,
$$\lim_{a\rightarrow +\infty} \frac{\hat{\mu} (\nu_a)}{(e+2)a^2-a}=1.$$

\end{example}

\section{Asymptotic approach to bounded negativity}
\label{sec4}

In \cite[Section 1.3]{Harb1} (see also \cite[Conjecture 3.7.1]{Bau2}), it is proposed an asymptotic approach to bounded negativity. This approach consists of, given a nef (big and nef, according to \cite{Bau2}) divisor $F$ on a surface $Z$, obtaining a lower bound on $C^2/(F\cdot C)^2$ for all integral curves $C$ such that  $F\cdot C>0$.

In this section we provide such a bound for surfaces $Z$ obtained from $\mathbb{P}^2$ by a finite sequence of point blowups $\pi:Z\rightarrow \mathbb{P}^2$ and $F=L^*$, the total transform  on $Z$ of a general line $L$ of $\mathbb{P}^2$. More specifically, we bound from below the following number:
 $$\lambda_{L^*}(Z):=\inf \left\{\frac{C^2}{(C\cdot L^*)^2} \mid \mbox{$C$ is an integral curve of $Z$ such that $C\cdot L^*>0$}\right\}.$$
Notice that
$$\lambda_{L^*}(Z)= \inf \left\{ \frac{\tilde{C}^2}{\deg(C)^2}\mid \mbox{$C$ is an integral curve of $\mathbb{P}^2$}\right\},$$
where $\tilde{C}$ denotes the strict transform of $C$ on $Z$.
We also prove that surfaces $Z$ as above can be grouped into packages (with infinitely many elements) sharing the same bound for the value $\lambda_{L^*}(Z)$. Our results do not assume any condition on the characteristic of the ground field $k$.

Let us state and prove the second main result of this paper (Theorem B in the introduction) and some consequences.

\begin{theorem}\label{th2}
Let $\nu$ be a divisorial valuation of $\mathbb{P}^2$ and set $Z:=X_{\nu}$ (see (\ref{Eq_sequencepointblowingupsP})). If $C$ is an integral curve of $\mathbb{P}^2$ different from the {tangent line}  of $\nu$ (if it exists) then
$$\frac{\tilde{C}^2}{\deg(C)^2}\geq -(1+\delta_0(\nu)),$$
where $\tilde{C}$ is the strict transform of $C$ on $Z$ and $\delta_0(\nu)$ the value defined in Theorem A.
\end{theorem}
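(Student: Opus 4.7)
The approach is to reuse the technique from the proof of Theorem \ref{th1}: view $\nu$ as a non-positive at infinity special valuation of the Hirzebruch surface $\mathbb{F}:=\mathbb{F}_{\delta_0(\nu)}$ centered at $q:=(1,0;1,0)$, with Hirzebruch coordinates chosen so that the fiber $F_1$ has local equation $u=0$ (matching the tangent line $H$ of $\nu$) and the section $M_0$ has local equation $v=0$. Let $D$ be the curve on $\mathbb{F}$ whose restriction to $U_{00}$ is defined by the same affine polynomial $f(u,v)$ as $C$ on $U_X$. Since $C$ is integral, $D$ is integral, and the infinitely near points in the configuration coincide in both pictures, giving $\mathrm{mult}_{p_i}(C)=\mathrm{mult}_{p_i}(D)=:m_i$ for all $i$. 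The plan is to compare $\tilde{C}^2$ on $X_{\nu}$ with $\tilde{D}^2$ on $Y_{\nu}$ and to apply Theorem \ref{Th1_caso_especial}(d) to control the latter.

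\textbf{Reduction to the key inequality.} First I would dispose of the trivial cases: the $\mathfrak{m}$-adic case ($\delta_0(\nu)=-1$), where $\tilde{C}^2=\deg(C)^2-\mathrm{mult}_p(C)^2\geq 0$, and the case $p\notin C$, where all $m_i$ vanish and $\tilde{C}^2=\deg(C)^2$. In the remaining case, writing $D\sim aF+bM$ on $\mathbb{F}$, one computes
\begin{equation*}
\tilde{C}^2=\deg(C)^2-\sum_{i=1}^{n}m_i^2,\qquad \tilde{D}^2=2ab+b^2\delta_0(\nu)-\sum_{i=1}^{n}m_i^2,
\end{equation*}
so $\tilde{C}^2=\tilde{D}^2+\deg(C)^2-2ab-b^2\delta_0(\nu)$. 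Thus the theorem reduces to (A) the bound $2ab+b^2\delta_0(\nu)\leq (2+\delta_0(\nu))\deg(C)^2$ and (B) the inequality $\tilde{D}^2\geq 0$. For (A), when $D$ is integral and distinct from $F_1$ and $M_0$ one has $a=D\cdot M_0\geq 0$ and $b=D\cdot F\geq 0$, and Lemma \ref{lemanuevo} yields $a\leq \deg_u(f)\leq \deg(C)$, $b\leq \deg_v(f)\leq \deg(C)$, whence the estimate.

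\textbf{Main obstacle.} Establishing (B) is the crux. Here I would invoke Theorem \ref{Th1_caso_especial}(d): since $\nu$ is non-positive at infinity on $\mathbb{F}$, the cone $NE(Y_{\nu})$ is finitely generated by the classes of the strict transforms of $F_1$, $M_0$ and the exceptional divisors $E_i$. A standard extremality argument then shows that any integral curve on $Y_{\nu}$ with strictly negative self-intersection must coincide with one of these finitely many generators. The strict transform $\tilde{D}$ is integral (since $D$ is), it is not an exceptional divisor (its image in $\mathbb{F}$ is a curve, not a point), and by assumption it is neither $\tilde{F_1}$ nor $\tilde{M_0}$. Hence $\tilde{D}^2\geq 0$, and combining with (A) gives $\tilde{C}^2\geq -(1+\delta_0(\nu))\deg(C)^2$.

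\textbf{Boundary cases.} To close the argument I would examine the two excluded possibilities for $D$. If $D=F_1$ then $C$ has local equation $u=0$ at $p$, so $C=H$, contradicting the hypothesis. If $D=M_0$ then $C$ is the projective line $\{Z=0\}$, a line through $p$ different from $H$, so $\nu(\varphi_C)=\overline{\beta}_0(\nu)=v_1$; Noether's formula (as used in Lemma \ref{bbb}) then forces $m_1=1$ and $m_i=0$ for $i\geq 2$, yielding $\tilde{C}^2=0\geq -(1+\delta_0(\nu))\deg(C)^2$. All cases combined give the desired bound.
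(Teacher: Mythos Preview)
Your proof is correct and follows essentially the same route as the paper: pass to $\mathbb{F}_{\delta_0(\nu)}$, use Theorem~\ref{Th1_caso_especial}(d) to force $\tilde D^2\geq 0$ unless $D\in\{F_1,M_0\}$, and combine with Lemma~\ref{lemanuevo} to control $\sum m_i^2$ against $\deg(C)^2$. The only difference is organizational: the paper splits into the cases $\tilde D^2<0$ versus $\tilde D^2\geq 0$, while you separate out $D\in\{F_1,M_0\}$ as boundary cases and prove $\tilde D^2\geq 0$ in the main case; your treatment of $D=M_0$ (computing $\tilde C^2=0$ directly via $m_1=1$, $m_i=0$ for $i\geq 2$) is in fact more transparent than the paper's, which records $\tilde D^2=-\delta_0(\nu)-1$ without making explicit why this settles the inequality for $\tilde C^2$.
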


\begin{proof}
 Suppose that $\nu$ is centered at $p\in \mathbb{P}^2$. We can assume without loss of generality that $\nu$ is not the $\mathfrak{m}$-adic valuation, $\mathfrak{m}$ being the maximal ideal of ${\mathcal O}_{\mathbb{P}^2,p}$ (otherwise the bound holds trivially). Set ${\mathcal C}_{\nu}=\{p_1,\ldots,p_n\}$ the configuration of centers of $\nu$ and notice that $\tilde{C}$
 is linearly equivalent to the divisor
$$\deg(C)L^*-\sum_{i=1}^n m_i E_i^*,$$
where  $m_i$ denotes the multiplicity of the strict transform of $C$ at $p_i$, $1\leq i\leq n$.

Let $f(u,v)=0$ be an equation of the restriction of $C$ to the affine chart $U_X$ of $\mathbb{P}^2$ (see Subsection \ref{divp2}) and assume, without loss of generality, that we are under Conditions (1), (2) and (3) of the proof of Theorem \ref{th1}. Then, by the arguments (and notations) of that proof, $f(u,v)=0$ can be viewed as the equation of an affine irreducible curve in the affine chart $U_{00}$ of $\mathbb{F}:= \mathbb{F}_{\delta_0(\nu)}$. Then it is the restriction to $U_{00}$ of an integral curve $D$ on $\mathbb{F}$ that is linearly equivalent to $a(\delta_0(\nu),f)F+b(\delta_0(\nu),f) M$, the valuation $\nu$ is a non-positive at infinity valuation of $\mathbb{F}$, and we can assume that it is centered at the point with homogeneous coordinates $(1:0;1:0)$. The strict transform $\tilde{D}$ of $D$ on the surface $Y_\nu$ given in (\ref{Eq_sequencepointblowingupsF}) is linearly equivalent to the divisor
$$a(\delta_0(\nu),f)F^*+b(\delta_0(\nu),f) M^*-\sum_{i=1}^n m_i E_i^*.$$
Now we distinguish two cases:
\begin{itemize}
\item \emph{Case 1:} $\tilde{D}^2<0.$ Then, since $\tilde{D}$ is integral and non-exceptional, it holds by Theorem \ref{Th1_caso_especial} that either $\tilde{D}=\tilde{F}_1$ or $\tilde{D}=\tilde{M}_0$, which implies that $C$ has degree 1. In the first case we get a contradiction since $C$ is different from the tangent line of $\nu$. In the second case, the strict transform of $C$ passes through $p=p_1$ but not through $p_2$; hence $\tilde{C}^2=0$ and the inequality given in the statement is true.




\item \emph{Case 2:} $\tilde{D}^2\geq 0$. Then
$$2\deg_u(f) \deg_v(f)+\left[\deg_v(f)\right]^2 \delta_0(\nu) -\sum_{i=1}^n m_i^2\geq 0$$
by Lemma \ref{lemanuevo}. As a consequence
$$(\delta_0(\nu)+2) \deg(C)^2\geq 2\deg_u(f) \deg_v(f)+[\deg_v(f)]^2 \delta_0(\nu)\geq \sum_{i=1}^n m_i^2$$
and, therefore,
$$\tilde{C}^2=\deg(C)^2-\sum_{i=1}^n m_i^2 \geq -(\delta_0(\nu)+1)\deg(C)^2.$$
\end{itemize}

\end{proof}

Some of the following results consider an arbitrary finite set $V = \{\nu_1,\ldots,\nu_N\}$ of divisorial valuations of $\mathbb{P}^2$. Each valuation $\nu_i$ is equipped with a morphism $\pi_i:X_{\nu_i}\rightarrow \mathbb{P}^2$ given by the composition of the blowups at its configuration of centers ${\mathcal C}_{\nu_i}$. Set ${\mathcal C}_V := \cup_{i=1}^N{\mathcal C}_{\nu_i}$ and denote by $X_V$ the surface obtained by the composition of the blowups centered at the points of ${\mathcal C}_V$ (after a suitable identification of points). Notice that any rational surface having $\mathbb{P}^2$ as a relatively minimal model is isomorphic to $X_V$ for some set $V$ as above.

\begin{corollary}\label{c1}

Let $V= \{\nu_1,\ldots,\nu_N\}$ be a finite set of divisorial valuations of $\mathbb{P}^2$ and set $Z:=X_V$. If $C$ is an integral curve of $\mathbb{P}^2$ that is not the tangent line of $\nu_i$ (whenever it exists) for all $i=1,\ldots,N$, then
$$\frac{\tilde{C}^2}{\deg(C)^2}\geq -\sum_{i=1}^N
\delta_0(\nu_i)-2N+1,$$
where $\tilde{C}$ denotes the strict transform of $C$ on $Z$ and $\delta_0(\nu_i)$ is the value defined in Theorem A.
\end{corollary}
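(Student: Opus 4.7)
The plan is to reduce the multi-valuation bound to the single-valuation bound of Theorem \ref{th2} by a simple decomposition, handling overlapping configurations through a crude but sufficient counting inequality.

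First, I would enumerate the set-theoretic union $\mathcal{C}:=\bigcup_{i=1}^N\mathcal{C}_{\nu_i}$ as $\{q_1,\ldots,q_M\}$. Since $Z=X_{\nu_1}\times_{\mathbb{P}^2}\cdots\times_{\mathbb{P}^2}X_{\nu_N}$ is the minimal surface dominating each $X_{\nu_i}$, the text preceding the statement tells us $Z$ is precisely the surface obtained by blowing up $\mathbb{P}^2$ at the configuration $\mathcal{C}$. Writing $m_k$ for the multiplicity of the strict transform of $C$ at $q_k$ and $\widetilde{E}_k$ for the exceptional divisor above $q_k$, the strict transform of $C$ on $Z$ is linearly equivalent to
\[
\deg(C)\,L^*-\sum_{k=1}^M m_k\,\widetilde{E}_k^*,
\qquad\text{so that}\qquad
\tilde{C}^2=\deg(C)^2-\sum_{k=1}^M m_k^2.
\]

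Next, I would apply Theorem \ref{th2} separately to each valuation $\nu_i$. Writing $\mathcal{C}_{\nu_i}=\{p_{i,1},\ldots,p_{i,n_i}\}$ and letting $m_{i,j}$ be the multiplicity of the strict transform of $C$ at $p_{i,j}$, the assumption that $C$ is not the tangent line of any $\nu_i$ (whenever one exists) allows Theorem \ref{th2} to yield, for every $i=1,\ldots,N$,
\[
\deg(C)^2-\sum_{j=1}^{n_i} m_{i,j}^2\geq -(1+\delta_0(\nu_i))\deg(C)^2,
\quad\text{i.e.,}\quad
\sum_{j=1}^{n_i} m_{i,j}^2\leq (2+\delta_0(\nu_i))\deg(C)^2.
\]

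Finally, I would combine these inequalities. Each distinct point $q_k\in\mathcal{C}$ lies in at least one configuration $\mathcal{C}_{\nu_i}$, so the term $m_k^2$ contributes at least once to the double sum $\sum_{i,j}m_{i,j}^2$; hence
\[
\sum_{k=1}^M m_k^2\leq \sum_{i=1}^N\sum_{j=1}^{n_i} m_{i,j}^2\leq \Bigl(2N+\sum_{i=1}^N \delta_0(\nu_i)\Bigr)\deg(C)^2,
\]
and substituting back into the expression for $\tilde{C}^2$ gives the stated bound. The only subtlety is the bookkeeping of possibly overlapping configurations, which is controlled by the first of the two inequalities above; no further geometric input beyond Theorem \ref{th2} is required, and any overcounting only makes the estimate safer rather than tighter.
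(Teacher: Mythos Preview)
Your proof is correct and follows essentially the same approach as the paper: both express $\tilde{C}^2$ via multiplicities over the union of configurations, use the overcounting inequality $\sum_{k} m_k^2 \leq \sum_{i}\sum_{j} m_{i,j}^2$, and then apply Theorem~\ref{th2} to each $\nu_i$ separately. The paper packages the same computation slightly differently, writing $\tfrac{\tilde{C}^2}{\deg(C)^2}\geq \sum_{i=1}^N \tfrac{(\tilde{C}^{X_{\nu_i}})^2}{\deg(C)^2}-(N-1)$ before invoking Theorem~\ref{th2}, but this is algebraically identical to your argument.
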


\begin{proof}

Notice that
$$ \frac{\tilde{C}^2}{\deg(C)^2}=1-\frac{1}{\deg(C)^2}\sum_p {\rm mult}_{p}(C)^2,$$
where $p$ runs over the set ${\mathcal C}_{\nu_1}\cup \cdots \cup {\mathcal C}_{\nu_N}$. Hence
$$\frac{\tilde{C}^2}{\deg(C)^2}\geq \sum_{i=1}^N \left(1- \frac{1}{\deg(C)^2} {\bold m}_i \right)-(N-1)=\sum_{i=1}^N \frac{(\tilde{C}^{X_{\nu_i}})^2}{\deg(C)^2}-(N-1) ,$$
where ${\bold m}_i:=\sum_{p\in {\mathcal C}_{\nu_i}} {\rm mult}_{p}(C)^2$ for all $i=1,\ldots,N$ and $\tilde{C}^{X_{\nu_i}}$ denotes the strict transform of $C$ on $X_{\nu_i}$. Then the result follows by Theorem \ref{th2}.

\end{proof}

Given a finite family $\{\nu_1,\ldots,\nu_N\}$ of divisorial valuations of $\mathbb{P}^2$, we say that the points of a subset $S$ in $\cup_{i=1}^n {\mathcal C}_{\nu_i}$ \emph{are aligned} if there exists a line of $\mathbb{P}^2$ whose strict transforms pass through the points in $S$.

\begin{corollary}\label{cor}
Let $V=\{\nu_1,\ldots,\nu_N\}$ be any finite set of divisorial valuations of $\mathbb{P}^2$ and set $Z:=X_V$. Then
$$\lambda_{L^*}(Z)\geq \min\left\{ 1-\mu,-\sum_{i=1}^N
\delta_0(\nu_i)-2N+1\right\},$$
where $\mu$ denotes the maximum cardinality of the subsets of aligned points in $\bigcup_{i=1}^N {\mathcal C}_{\nu_i}$ and $\delta_0(\nu_i)$ is defined as in Theorem A.


\end{corollary}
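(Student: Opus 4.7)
The approach is to reduce to Corollary \ref{c1} whenever possible, and to handle the remaining cases (when $C$ is a line) by a direct intersection-theoretic computation that exploits the definition of $\mu$. The infimum $\lambda_{L^*}(Z)$ is taken over integral curves $C$ of $\mathbb{P}^2$, and it suffices to bound $\tilde{C}^2/\deg(C)^2$ from below for each such curve by one of the two quantities appearing in the $\min$.

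First, I would dispatch the case $\deg(C)\geq 2$. Any such $C$ is automatically different from every tangent line of any $\nu_i$, since tangent lines have degree one. Hence Corollary \ref{c1} applies directly and yields
\[
\frac{\tilde{C}^2}{\deg(C)^2}\;\geq\; -\sum_{i=1}^N \delta_0(\nu_i)-2N+1.
\]

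The remaining case is $\deg(C)=1$, i.e.\ $C$ is a line $L$ of $\mathbb{P}^2$ (possibly coinciding with a tangent line of some $\nu_i$, so that Corollary \ref{c1} is unavailable). Here I would compute $\tilde{L}^2$ directly: since $L$ is smooth, the multiplicity at any point $p\in\bigcup_{i=1}^N\mathcal{C}_{\nu_i}$ of (the successive strict transform of) $L$ is either $0$ or $1$. Writing $S_L$ for the subset of $\bigcup_{i=1}^N\mathcal{C}_{\nu_i}$ through which the strict transform of $L$ passes, the orthogonality relations $L^*\cdot E_p^*=0$ and $E_p^*\cdot E_q^*=-\delta_{pq}$ on $Z$ give
\[
\tilde{L}^2 \;=\; (L^*)^2-\sum_{p\in S_L}1 \;=\; 1-|S_L|.
\]
By construction, the points of $S_L$ all lie (via strict transform) on the single line $L$, so they form an aligned subset of $\bigcup_{i=1}^N\mathcal{C}_{\nu_i}$. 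Hence $|S_L|\leq \mu$ by maximality of $\mu$, and therefore $\tilde{L}^2/\deg(L)^2=\tilde{L}^2\geq 1-\mu$.

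Combining both cases, every integral curve $C$ of $\mathbb{P}^2$ satisfies $\tilde{C}^2/\deg(C)^2\geq \min\{1-\mu,\,-\sum_i\delta_0(\nu_i)-2N+1\}$, which is exactly the claim. The only conceptual subtlety is recognizing that Corollary \ref{c1} covers all curves except lines, and that the case of lines is cleanly controlled by the parameter $\mu$; there is no substantial obstacle beyond this bookkeeping.
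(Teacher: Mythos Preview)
Your proof is correct and follows essentially the same approach as the paper: split into the case where $C$ is a line (bounded by $1-\mu$) and the case $\deg(C)\geq 2$ (where Corollary~\ref{c1} applies). The paper's proof is simply a terser version of what you wrote.
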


\begin{proof}

Let $C$ be an integral curve of $\mathbb{P}^2$. If $C$ is a line of $\mathbb{P}^2$ then its strict transform $\tilde{C}$ on $Z$ satisfies $\tilde{C}^2\geq 1-\mu$. Otherwise $\frac{\tilde{C}^2}{\deg(C)^2}\geq -\sum_{i=1}^N
\delta_0(\nu_i)-2N+1$ by Corollary \ref{c1}.

\end{proof}

As before, we give an example showing the asymptotic sharpness of our bound in some cases.

\begin{example}
Assume the same situation and notations as in Example \ref{exa1}. If $\tilde{C}_a$ denotes the strict transform of the curve $C_a$ in $X_{\nu_a}$ then
$$\frac{\tilde{C}_a^2}{\deg(C_a)^2}=\frac{(a^2+1)^2-(e+2)a^4+2a^3}{(a^2+1)^2}.$$
Hence $- (e+1) \leq \lambda_{L^*}(X_{\nu_a})\leq \frac{(a^2+1)^2-(e+2)a^4+2a^3}{(a^2+1)^2}$ because $-(e+1)$ is the lower bound of $\lambda_{L^*}(X_{\nu_a})$ (for all $a\geq 3$) provided by Corollary \ref{cor}. This implies that $$\lim_{a\rightarrow +\infty} \lambda_{L^*}(X_{\nu_a})=-(e+1).$$
\end{example}

The following corollary provides, for any divisorial valuation $\nu$ of $\mathbb{P}^2$, a bound for the value $\lambda_{L^*}(X_{\nu})$ depending only on the dual graph of $\nu$ (that is, from purely combinatorial information given by the valuation $\nu$).














\begin{corollary}
\label{cor1}
Let $\nu$ be a divisorial valuation of $\mathbb{P}^2$ admitting a tangent line and with associated configuration ${\mathcal C}_{\nu}=\{p_1,\ldots,p_n\}$.

\begin{itemize}
\item[(a)] If $n\geq 3$ and $p_3$ is satellite, then
$$\lambda_{L^*}(X_{\nu})\geq  -1-\left\lceil \left(\frac{\betabarra_0(\nu)}{\betabarra_1(\nu)}\right)^2  \left[{\rm vol}^N(\nu)\right]^{-1}-2\frac{\betabarra_0(\nu)}{\betabarra_1(\nu)}\right\rceil^+.$$

\item[(b)] Otherwise, $$\lambda_{L^*}(X_{\nu})\geq \min \left\{ 1-\left\lceil \frac{\betabarra_1(\nu)}{\betabarra_0(\nu)}\right\rceil, -1-\left\lceil \frac{1}{4} \left[{\rm vol}^N(\nu)\right]^{-1}-2\frac{\betabarra_0(\nu)}{\betabarra_1(\nu)}\right\rceil^+ \right\},$$
where $\lceil\; \rceil$ denotes the ceiling function and $\lceil\; \rceil^+$ is defined as in Theorem A.
\end{itemize}
\end{corollary}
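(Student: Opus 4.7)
The plan is to apply Corollary \ref{cor} with $N=1$, which yields $\lambda_{L^*}(X_{\nu}) \geq \min\{1-\mu, -\delta_0(\nu)-1\}$, and then to control both arguments of the minimum in terms of $\betabarra_0(\nu)$, $\betabarra_1(\nu)$, and $[{\rm vol}^N(\nu)]^{-1}$. Write $H$ for the tangent line of $\nu$. Any line containing a center other than $p_1$ must coincide with $H$ (since $p_2,\ldots,p_n$ lie above $p_1$), so $\mu$ equals the number of centers on $H$. Because $H$ is smooth, its successive strict transforms meet each exceptional divisor transversally at a single point, necessarily distinct from the satellite intersection $\tilde{E}_{i-1}\cap E_i$; consequently $H$ does not pass through any satellite center, whence $\mu\leq r_1$, where $r_1$ denotes the index of the last free point of $\mathcal{C}_1$. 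In particular $t(\nu)=\nu(\varphi_H)=\sum_{i=1}^{\mu}v_i$.

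The combinatorial heart of the argument is the identity $r_1 = \lceil \betabarra_1(\nu)/\betabarra_0(\nu)\rceil$. Using the standard proximity equalities for the multiplicity sequence (together with the fact that $\tilde{E}_1$ cannot meet any later exceptional divisor once a free $p_3$ has been blown up off it, so no $p_j$ with $j\geq 3$ is proximate to $p_1$), one checks that $v_1=v_2=\cdots=v_{r_1-1}=\betabarra_0(\nu)$; the proximity equality at $p_{r_1-1}$, whose unique additional proximate point is the first satellite $p_{r_1+1}$ (if any), then yields $v_{r_1}<\betabarra_0(\nu)$ whenever $n>r_1$, while $v_i=1=\betabarra_0(\nu)$ for all $i$ in the degenerate case $n=r_1$. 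Combining this with the curvette formula $\betabarra_1(\nu)=v_1+\cdots+v_{r_1}$ produces the identity, and in particular $2\betabarra_0(\nu)\leq t(\nu)\leq \betabarra_1(\nu)$.

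For case (a), the assumption that $p_3$ is satellite forces $r_1=2$, so $\mu=2$ and $t(\nu)=2\betabarra_0(\nu)=\betabarra_1(\nu)$. Substituting into the definition of $\delta_0(\nu)$ via ${\rm vol}(\nu)^{-1}=\betabarra_0(\nu)^2[{\rm vol}^N(\nu)]^{-1}$, one finds $-\delta_0(\nu)-1 = -1-\lceil(\betabarra_0/\betabarra_1)^2[{\rm vol}^N(\nu)]^{-1}-2\betabarra_0/\betabarra_1\rceil^+ \leq -1 = 1-\mu$, so the minimum delivers the bound in (a). For case (b), the inequality $\mu\leq r_1=\lceil \betabarra_1/\betabarra_0\rceil$ handles the first argument; for the second, writing $s=\betabarra_0(\nu)/t(\nu)\in[\betabarra_0/\betabarra_1,\,1/2]$, one estimates $s^2\leq 1/4$ and $-2s\leq -2\betabarra_0/\betabarra_1$ separately to obtain $(\betabarra_0/t)^2[{\rm vol}^N(\nu)]^{-1}-2\betabarra_0/t\leq (1/4)[{\rm vol}^N(\nu)]^{-1}-2\betabarra_0/\betabarra_1$, whence $\delta_0(\nu)\leq \lceil (1/4)[{\rm vol}^N(\nu)]^{-1}-2\betabarra_0/\betabarra_1\rceil^+$, as required.

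The main technical obstacle is the identity $r_1=\lceil \betabarra_1(\nu)/\betabarra_0(\nu)\rceil$: establishing it rigorously requires careful bookkeeping with the Enriques--Chisini proximity data inside the first cluster of $\mathcal{C}_{\nu}$, though the ingredients are all standard (cf.\ \cite{Spiv,DelGalNun}).
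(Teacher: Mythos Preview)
Your proof is correct and follows exactly the same route as the paper: apply Corollary~\ref{cor} with $N=1$ and then bound $\mu$ and $\delta_0(\nu)$ in each case using $t(\nu)=\betabarra_1(\nu)$, $\mu=2$ in (a) and $2\betabarra_0(\nu)\leq t(\nu)\leq \betabarra_1(\nu)$, $\mu\leq\lceil\betabarra_1(\nu)/\betabarra_0(\nu)\rceil$ in (b). In fact you supply more detail than the paper, which simply asserts these facts; your argument via $\mu\leq r_1$ and the identity $r_1=\lceil\betabarra_1/\betabarra_0\rceil$ is a clean way to justify them.

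Two small misstatements (which do not affect your argument) should be corrected. First, in case~(a) you write $t(\nu)=2\betabarra_0(\nu)=\betabarra_1(\nu)$, but in general $\betabarra_1(\nu)<2\betabarra_0(\nu)$ when $p_3$ is satellite: for instance with $n=3$ one has $v_1=2$, $v_2=v_3=1$, so $\betabarra_1=v_1+v_2=3<4=2\betabarra_0$. The correct claim is simply $t(\nu)=\betabarra_1(\nu)$, and that is all you actually use in the computation of $\delta_0(\nu)$. Second, and for the same reason, the inequality $2\betabarra_0(\nu)\leq t(\nu)$ that you state before the case split is false in case~(a); it holds only in case~(b), where $p_3$ being free forces $v_2=v_1=\betabarra_0(\nu)$ so that $t(\nu)\geq v_1+v_2=2\betabarra_0(\nu)$. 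Since you invoke this inequality only inside case~(b), the logic is unaffected; just move the statement there.
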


\begin{proof}

Keep the notations as in Corollary \ref{cor}. To prove (a), assume that $n\geq 3$ and $p_3$ is satellite. Then it is clear that $t(\nu)=\betabarra_1(\nu)$ and $\mu=2$. Thus, by Corollary \ref{cor},
$\lambda_{L^*}(X_{\nu})\geq -1-\delta_0(\nu)$. The fact that
$$\delta_0(\nu)=\left\lceil\frac{{\rm vol}(\nu)^{-1}-2\betabarra_0(\nu) \betabarra_1(\nu)}{\betabarra_1(\nu)^2} \right\rceil^+=\left\lceil\left(\frac{\betabarra_0(\nu)}{\betabarra_1(\nu)}\right)^2\left[{\rm vol}^N(\nu)\right]^{-1}-2\frac{\betabarra_0(\nu)}{\betabarra_1(\nu)}\right\rceil^+$$
finishes the proof in this case.

To prove (b), assume that either $n=2$ or $p_3$ is free. This implies that $2\betabarra_0(\nu)\leq t(\nu)\leq \betabarra_1(\nu)$. Then
$$\delta_0(\nu)\leq \left\lceil\frac{1}{4}\left[{\rm vol}^N(\nu)\right]^{-1}-2\frac{\betabarra_0(\nu)}{\betabarra_1(\nu)}\right\rceil^+.$$
This inequality, together with Corollary \ref{cor} and the fact that $\mu\leq \lceil \betabarra_1(\nu)/\betabarra_0(\nu) \rceil$, proves Part (b).

\end{proof}

\begin{remark}
Let $\nu$ be a divisorial valuation  of $\mathbb{P}^2$ such that $ \# {\mathcal C}_{\nu} \geq 2$ (where $ \#$ means cardinality), the bound of $\lambda_{L^*}(X_{\nu})$ provided in Corollary \ref{cor1} is not less than $1-\left\lceil\left[{\rm vol}^N(\nu)\right]^{-1}\right\rceil$. Since $\left\lceil\left[{\rm vol}^N(\nu)\right]^{-1}\right\rceil \leq \# {\mathcal C}_{\nu}$ (a consequence of Equality (\ref{bb})), the mentioned bound is not worse than the trivial bound $\lambda_{L^*}(X_{\nu})\geq 1-\# {\mathcal C}_{\nu}$. In fact one can find valuations where our bound improves the trivial one as much as one desires because, for any real number $\alpha > 1$, the set
$$
\left\{\# {\mathcal C}_{\nu}\mid \mbox{$\nu$ is a divisorial valuation of $\mathbb{P}^2$ such that $\left\lceil\left[{\rm vol}^N(\nu)\right]^{-1}\right\rceil \leq \alpha$ } \right\}
$$
is unbounded. Reasoning along the same lines, a similar statement holds for the more general bound given in Corollary \ref{cor}.
\end{remark}

We conclude this section by showing the existence of families of infinitely many rational surfaces $Z$, obtained from the projective plane by sequences of blowing-ups and with arbitrarily big Picard number, sharing the same bound for $\lambda_{L^*}(Z)$.

\begin{corollary}
\label{cor2}
Let $V= \{\nu_1,\ldots,\nu_N\}$ be any finite family of divisorial valuations of $\mathbb{P}^2$. Set $Z(1)=X_{V}$. Assume that $\nu_1,\ldots,\nu_k$ (with $1 \leq k\leq N$) admit a tangent line and that, for all $i=1,\ldots,k$, the last point $p_{n_i}$ of ${\mathcal C}_{\nu_i}=\{p_1,\ldots,p_{n_i}\}$ is free.
For each $i=1,\ldots,k$, consider any set of infinitely near points ${\mathcal D}_{\nu_i}=\{p_{n_i+1},\ldots, p_{m_i}\}$
such that, $p_{n_i+1}$ is proximate to $p_{n_i}$ and $p_{n_i-1}$ and, for all $j=n_j+2,\ldots,m_i$, $p_j$ is satellite and belongs to the first infinitesimal neighbourhood of $p_{j-1}$. Set $Z(2)=X_{V'},$
where
$$
V' = \{\nu'_1, \ldots, \nu'_k, \nu_{k+1}, \ldots, \nu_N\},
$$
$\nu'_i$, $1\leq i\leq k$, being the divisorial valuation of $\mathbb{P}^2$ whose associated configuration is ${\mathcal C}_{\nu_i}\cup {\mathcal D}_{\nu_i}$. Then $\lambda_{L^*}(Z(2))$ is not lower than the bound of $\lambda_{L^*}(Z(1))$ provided by Corollary \ref{cor}, that is,
$$\lambda_{L^*}(Z(2))\geq \min\left\{ 1-\mu,-\sum_{i=1}^N
\delta_0(\nu_i)-2N+1\right\},$$
where $\mu$ denotes the cardinal of a maximal subset of aligned points in $\bigcup_{i=1}^N {\mathcal C}_{\nu_i}$ and $\delta_0(\nu_i)$ is defined as in Theorem A.
\end{corollary}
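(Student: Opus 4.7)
The plan is to apply Corollary~\ref{cor} directly to the surface $Z(2)$ using the family of valuations $\nu'_1,\ldots,\nu'_k,\nu_{k+1},\ldots,\nu_N$, and then to verify that the bound so obtained is at least the Corollary~\ref{cor} bound for $Z(1)$. Writing $\mu'$ for the cardinality of a maximal aligned subset of $\bigcup_{i=1}^{k}\mathcal{C}_{\nu'_i}\cup\bigcup_{i>k}\mathcal{C}_{\nu_i}$, the statement is reduced to two claims: (i) $\mu'\leq\mu$; and (ii) $\delta_0(\nu'_i)\leq\delta_0(\nu_i)$ for every $i=1,\ldots,k$.

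For (i) I show that no line of $\mathbb{P}^2$ has its strict transform (by the blowup sequence defining $X_{\nu'_i}$) passing through any point of $\mathcal{D}_{\nu_i}$. Since $p_{n_i}$ is free in $\mathcal{C}_{\nu_i}$, the strict transform of any line through $p_{n_i}$ is smooth and meets the strict transform of $E_{n_i-1}$ transversally at $p_{n_i}$; hence after blowing up $p_{n_i}$ it meets $E_{n_i}$ at a point distinct from the satellite point $p_{n_i+1}=E_{n_i}\cap\tilde{E}_{n_i-1}$. Iterating this observation along the prescribed chain $p_{n_i+2},\ldots,p_{m_i}$ of satellite points in first infinitesimal neighbourhoods yields the claim, so any aligned subset is forced to lie inside the original union $\bigcup_{i=1}^{N}\mathcal{C}_{\nu_i}$.

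For (ii) I argue that $\nu'_i$ is a non-positive at infinity special divisorial valuation of the \emph{same} Hirzebruch surface $\mathbb{F}_{\delta_0(\nu_i)}$; by the characterization in Theorem~\ref{Th1_caso_especial}(c) this gives $\delta_0(\nu'_i)\leq\delta_0(\nu_i)$. The cleanest route uses condition (d) of Theorem~\ref{Th1_caso_especial}. Since $\nu_i$ is non-positive at infinity for $\mathbb{F}_{\delta_0(\nu_i)}$, the cone $NE(Y_{\nu_i})$ is generated by the strict transforms of the fiber $F_1$ through $p$, the special section $M_0$, and the irreducible exceptional divisors of $\pi_i\colon Y_{\nu_i}\to\mathbb{F}_{\delta_0(\nu_i)}$. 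Passing from $Y_{\nu_i}$ to $Y_{\nu'_i}$ amounts to a composition of blowups at the points of $\mathcal{D}_{\nu_i}$, and a standard argument shows that $NE(Y_{\nu'_i})$ is generated by the strict transforms on $Y_{\nu'_i}$ of the generators of $NE(Y_{\nu_i})$ together with the new irreducible exceptional divisors $E_{n_i+1},\ldots,E_{m_i}$. Thus condition (d) holds for $\nu'_i$ on $\mathbb{F}_{\delta_0(\nu_i)}$, which proves (ii).

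Combining (i) and (ii), Corollary~\ref{cor} applied to $Z(2)$ gives
$$
\lambda_{L^*}(Z(2))\geq\min\bigl\{1-\mu',\;-\sum_{i=1}^{k}\delta_0(\nu'_i)-\sum_{i>k}\delta_0(\nu_i)-2N+1\bigr\}\geq\min\bigl\{1-\mu,\;-\sum_{i=1}^{N}\delta_0(\nu_i)-2N+1\bigr\},
$$
which is the desired inequality. The main obstacle is the rigorous verification of~(ii): besides the bookkeeping that $NE(Y_{\nu'_i})$ picks up no genuinely new negative integral curve beyond the pulled-back generators and the new exceptional divisors $E_{n_i+1},\ldots,E_{m_i}$, one has to confirm that the special valuation hypothesis of Theorem~\ref{Th1_caso_especial} is inherited from $\nu_i$ to $\nu'_i$ on $\mathbb{F}_{\delta_0(\nu_i)}$, which is where the specific structure of the prescribed satellite chain (the first satellite proximate simultaneously to $p_{n_i}$ and $p_{n_i-1}$, followed by satellites lying in first infinitesimal neighbourhoods) is essential.
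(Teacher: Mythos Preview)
Your overall strategy coincides with the paper's: apply Corollary~\ref{cor} to $Z(2)$ and reduce to the two comparisons (i) $\mu'\le\mu$ and (ii) $\delta_0(\nu'_i)\le\delta_0(\nu_i)$. Your treatment of~(i) is correct and in fact more explicit than the paper, which passes over this point in silence (it is indeed immediate once one notices, as you do, that the strict transform of a line cannot go through a satellite point).

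The difference, and the gap, is in~(ii). The paper does \emph{not} argue geometrically via condition~(d) of Theorem~\ref{Th1_caso_especial}; it works directly with condition~(c) by computing how the maximal contact values change when the satellite tail ${\mathcal D}_{\nu_i}$ is appended. Concretely, if $\{\bar\beta_j(\nu_i)\}_{j=0}^{g}$ are the maximal contact values of $\nu_i$ and $e:=e_{g-1}(\nu'_i)$, then $\bar\beta_j(\nu'_i)=e\,\bar\beta_j(\nu_i)$ for $j\le g-1$, $\bar\beta_g(\nu'_i)=e\,\bar\beta_g(\nu_i)-a$ with $0<a<e$, $\bar\beta_{g+1}(\nu'_i)=e\bigl(e\,\bar\beta_g(\nu_i)-a\bigr)$, and $t(\nu'_i)=e\,t(\nu_i)$. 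A short computation then shows that
\[
\frac{[{\rm vol}(\nu_i)]^{-1}-2\bar\beta_0(\nu_i)t(\nu_i)}{t(\nu_i)^2}\;-\;\frac{[{\rm vol}(\nu'_i)]^{-1}-2\bar\beta_0(\nu'_i)t(\nu'_i)}{t(\nu'_i)^2}
\]
lies in $(0,1)$, whence $\delta_0(\nu'_i)\le\delta_0(\nu_i)$.

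Your route via~(d) is not a ``standard argument''. In general, blowing up a point on a surface whose cone of curves is polyhedral does \emph{not} yield a cone generated by the strict transforms of the old extremal curves together with the new exceptional divisor: writing $[C]=\sum a_j[C_j]$ on $Y_{\nu_i}$ and pulling back, the coefficient of the new exceptional divisor in the strict transform is $\sum_j a_j\,{\rm mult}_q(C_j)-{\rm mult}_q(C)$, and there is no a~priori reason for this to be non-negative. To push your approach through you would have to exploit the exact position of $q=p_{n_i+1}$ on $\tilde E_{n_i}\cap\tilde E_{n_i-1}$ and control that coefficient; unwinding this essentially forces you back into the proximity/multiplicity bookkeeping that the paper handles cleanly through the maximal contact values. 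You also flag, correctly, that the ``special'' hypothesis in Theorem~\ref{Th1_caso_especial} must be checked for $\nu'_i$; this is automatic here since the added points are satellite and therefore do not affect the strict transform of any curve in $|M|$, but it should be said. In short, your proposal is structurally sound, but for~(ii) the computational route through condition~(c) is both shorter and actually complete.
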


\begin{proof}
 Pick $i\in \{1,\ldots,k\}$ and set $(\betabarra_j(\nu_i))_{j=0}^g$ the sequence of maximal contact values of the valuation $\nu_i$. Since we add satellite points, the sequence of maximal contact values of the valuation $\nu'_i$, $(\betabarra_j(\nu'_i))_{j=0}^{g+1}$, has $g+2$ elements. In addition,  defining $e_{g-1} (\nu'_i) := \gcd(\betabarra_0(\nu'_i), \betabarra_1(\nu'_i), \ldots, \betabarra_{g-1}(\nu'_i))$, by (\ref{Eq_betabarravaluationcurvettes}) it holds that
\[
\betabarra_j(\nu'_i) = e_{g-1} (\nu'_i)\betabarra_j(\nu_i), \; 0 \leq j \leq g-1,
\]
\[
\betabarra_g(\nu'_i) = e_{g-1} (\nu'_i)\betabarra_g(\nu_i) - a, \mbox{ where $a < e_{g-1}(\nu'_i)$}
\]
and
\[\betabarra_{g+1}(\nu'_i) = e_{g-1}(\nu'_i) \left(e_{g-1} (\nu'_i)\betabarra_g(\nu_i) - a\right).
 \]
 Also $t(\nu'_i) = e_{g-1}(\nu'_i) t(\nu_i)$. Then, straightforward computations prove that the difference
 \[
 \frac{\left[{\rm vol}(\nu_i)\right]^{-1}-2\betabarra_0(\nu_i) t(\nu_i)}{t(\nu_i)^2} - \frac{\left[{\rm vol}(\nu'_i)\right]^{-1}-2\betabarra_0(\nu'_i) t(\nu'_i)}{t(\nu'_i)^2}
 \]
 is positive (and less than one). This fact implies the inequality $\delta_0(\nu'_i)\leq \delta_0(\nu_i)$ which, considering Corollary \ref{cor}, concludes the proof.
\end{proof}

\section*{Acknowledgments}
We thank the anonymous reviewers for their careful reading of our manuscript. We especially thank one of the reviewers for pointing out an incorrect usage of the fibred product in a previous version of this article.


\bibliographystyle{plain}
\bibliography{BIBLIO}

\end{document}